\documentclass[11pt]{amsart}

\usepackage[hmargin=0.8in,twosideshift=0in,height=8.6in]{geometry}
\usepackage{amssymb,amsthm}
\usepackage{delarray,verbatim}
\usepackage{natbib}
\renewcommand{\cite}{\citeyearpar}
\usepackage{ifpdf}
\ifpdf
\usepackage[pdftex]{graphicx}
\DeclareGraphicsRule{*}{mps}{*}{}
\else
\usepackage[dvips]{graphicx}
\DeclareGraphicsRule{*}{eps}{*}{}
\fi

\linespread{1.2}

\usepackage{ifpdf}
\usepackage{color}
\definecolor{webgreen}{rgb}{0,.5,0}
\definecolor{webbrown}{rgb}{.8,0,0}
\definecolor{emphcolor}{rgb}{0.95,0.95,0.95}

\usepackage{hyperref}
\hypersetup{%
%          draft,   %to suppress all hypertext options
          colorlinks=true,
          linkcolor=webbrown,
          filecolor=webbrown,
          citecolor=webgreen,
          breaklinks=true}
\ifpdf
\hypersetup{pdftex,
%             pdftitle={Decision Making with Poisson process},
%             pdfauthor={Semih Sezer},
            pdfstartview=FitH, %%Fit, FitB, FitH
            bookmarksopen=true,
            bookmarksnumbered=true
}
\else
\hypersetup{dvips}
\fi

\renewcommand{\theequation}{\thesection. \arabic{equation}}

\numberwithin{equation}{section}
\numberwithin{figure}{section}
\newtheorem{proposition}{Proposition}[section]
\newtheorem{coro}{Corollary}[section]
\newtheorem{remark}{Remark}[section]
\newtheorem{lemma}{Lemma}[section]
\newtheorem{example}{Example}[section]

\renewcommand{\S}{\mathcal{S}}
\newcommand {\ME}{\mathbb{E}^{x,i}}

\newcommand {\R}{\mathbb{R}}
\newcommand {\Fb}{\mathbb{F}}

\newcommand {\F}{\mathcal{F}}
\newcommand {\A}{\mathcal{A}}

\newcommand {\p}{\mathbb{P}}

\newcommand {\E}{\mathbb{E}}

\newcommand{\ttau}{\widetilde{\tau}}
\newcommand{\N}{\mathbb{N}}
\newcommand{\eps}{\varepsilon}
\newcommand{\diff}{{\rm d}}

\title[On the One-Dimensional Optimal Switching Problem]{On the One-Dimensional Optimal Switching Problem}
\author{Erhan Bayraktar }
\address[E. Bayraktar]{Department of
  Mathematics, University of Michigan, Ann Arbor, MI 48109}
\email{erhan@umich.edu}
\thanks{E. Bayraktar is supported in part by the National Science
Foundation. M. Egami is supported in part by Grant-in-Aid for
Scientific Research (C) No. 20530340, Japan Society for the
Promotion of Science.  An earlier version of this article is
available at Arxiv, see Bayraktar and Egami
\cite{bayraktaregami-2007}. We thank Savas Dayanik for his feedback
in the early stage of this work.}
\author{Masahiko Egami}
\address[M. Egami]{ Graduate School of Economics,
Kyoto University, Sakyo-Ku, Kyoto, 606-8501, Japan }
\email{egami@econ.kyoto-u.ac.jp}

\subjclass[2000]{60G40, 60J60, 93E20}
\keywords{Optimal switching problems, optimal stopping problems, It\^{o} diffusions.}

\begin{document}

\begin{abstract}\noindent
We explicitly solve the optimal switching problem for
one-dimensional diffusions by directly employing the dynamic programming principle and the excessive characterization of the value function. The shape of the value function and the smooth fit principle then can be proved using the properties of concave functions.
\end{abstract}

\maketitle
\section{Introduction}  Stochastic \emph{optimal switching} problems (or \emph{starting and
stopping} problems) are important subjects both in mathematics and
economics. Switching problems were introduced into the study of \emph{real options} by Brennan and Schwarz
\cite{BS1985}  to determine the manager's optimal decision making  in resource extraction problems, and
 by Dixit \cite{D1989} to analyze production facility problems.
A switching problem in the case of a resource extraction problem can be described as follows:   The
controller monitors the price of natural resources and wants to optimize her profit by operating an extraction facility in an optimal way. She can
choose when to start extracting this resource and when to
temporarily stop doing so, based upon price fluctuations she
observes.  The problem is concerned with finding an optimal
starting/stopping (switching) policy and the corresponding value function.

There has been many recent developments in understanding the nature
of the optimal switching problems. When the underlying state
variable is geometric Brownian motion and for some special
reward/cost structure Brekke and {\O}ksendal \cite{BO1994},
Duckworth and Zervos \cite{DZ2001}, Zervos \cite{Zer2003} apply a
verification approach for solving the variational inequality
associated with the optimal switching problem. By using a viscosity solution approach, Pham and Ly Vath
\cite{pham05} generalize the previous results by solving the optimal
switching problem for more general reward functions. They do not
assume a specific form but only H\"{o}lder continuity of the reward
function. In contrast, our aim is
to obtain general results that applies to all one-dimensional
diffusions (in some switching problems a mean reverting process
might be more reasonable model for the underlying state process). Also, we will not assume the H\"{o}lder
continuity of the running reward function.

The verification approach applied in the above papers is indirect in the sense
that one first conjectures the form of the value function and the
switching policy and next verifies the optimality of the candidate
function by proving that the candidate satisfies the variational
inequalities.  In finding the specific form of the candidate
function, appropriate boundary conditions, including the smooth-fit
principle, are employed.  This formation shall lead to a system of
non-linear equations that are often hard to solve and the existence
of the solution to these system of equations is difficult to prove.  Moreover,
this indirect solution method is specific to the underlying process
and reward/cost structure of the problem.  Hence a slight change in
the original problem often causes a complete overhaul in the highly
technical solution procedures.

Our solution method is direct in the sense that we work with the
value function itself. First we characterize the value function
as the solution of two coupled optimal stopping problems. In other words we prove a dynamic programming principle. A proof of a dynamic programming principle for switching problems was given by Tang and Yong \cite{MR1306930} assuming a H\"{o}lder continuity condition on the reward function.
We give a new proof using a sequential approximation method (see Lemma~\ref{eq:main-result-1} and Proposition~\ref{prop:representation}) and avoid making this assumption. The properties of the essential supremum and optimal stopping theory for Markov processes play a key role in our proof.
Second, we give a sufficient condition which guarantees that the switching regions hitting times of certain closed sets (see Proposition~\ref{prop:hitting-times}). Next, making use of our sequential approximation we show when the optimal switching problem reduces to an ordinary stopping problem (see Proposition~\ref{prop:degeneracy}).
Finally, in the non-degenerate cases we construct an explicit solution (see Proposition~\ref{prop:smooth-fit}) using the excessive
characterization of the value functions of optimal stopping problem
(which corresponds to the concavity of the value function after a
certain transformation) Dayanik and Karatzas \cite{DK2003} (also see Dynkin \cite{Dynkin}, Alvarez \cite{Alv2001,
alvarez4}), see Lemma~\ref{lem:nlemm}. In Proposition~\ref{prop:smooth-fit}, we see that the continuation regions do not necessarily have to be connected. We give two examples, one of which illustrates this point. In the next example, we consider an problem in which the underlying state variable is an
Ornstein-Uhlenbeck process.

It is worth mentioning the work of Pham \cite{pham06}, which
provides another direct method to solve optimal switching problems
through the use of viscosity solution technique. Pham shows that the
value function of the optimal switching problem is continuously
differentiable and is the classical solution of its
quasi-variational inequality under the assumption that the reward
function is Lipschitz continuous. Johnson and Zervos \cite{JZ09}, on the other hand, by using a verification theorem, determine sufficient conditions that guarantee that the problem has connected continuation regions or is degenerate (see Section 5 and Theorem 7 of that paper).
A somewhat related problem to the
optimal switching problem we study here is the infinite horizon
optimal multiple stopping problem of Carmona and Dayanik
\cite{CD2003}, which was introduced to give a complete mathematical analysis of energy swing contracts. This problem is posed in the context of pricing
American options when the holder of the option has multiple  $n$
exercise rights. To make the problem non-trivial it is assumed that
the holder chooses the consecutive stopping times with a strictly
positive break period (otherwise the holder would use all his rights
at the same time). It is difficult to explicitly determine the
solution and Carmona and Dayanik describe a recursive algorithm to
calculate the value of the American option. In the switching
problems, however, there are no limits on how many times the
controller can switch from one state to another and one does not
need to assume a strictly positive break period. Moreover, we are
able to construct explicit solutions. Other related works include, Hamad\`{e}ne and Jeanblanc
\cite{HJ2004}, which analyzes a finite time horizon optimal switching problem with a general adapted observation process using the recently developed theory of reflected stochastic backward differential equations.
Carmona and Ludkovski \cite{CL2005} focus on a numerical resolution
based on Monte-Carlo regressions. Recently an interesting connection between the singular and the switching problems was given by Guo and Tomecek \cite{MR2373476}.

The rest of the paper is organized as follows:  In Section 2.1 we define the optimal switching problem. In Section 2.2 we study the problem in which the controller only can switch finitely many times. Using the results of Section 2.2, in Section 2.3 we give a characterization of the optimal switching problem as two coupled optimal stopping problems. In Section 2.4, we show that the \emph{usual} hitting times of the stopping regions are optimal. In Section 2.5 we give an explicit solution.
In Section 2.6 we give two examples illustrating our solution.

\section{The Optimal Switching Problem}
\subsection{Statement of the Problem}
Let $(\Omega, \F, \p)$ be a complete probability space hosting a Brownian motion $W=\{W_t; t\geq
0\}$. Let $\mathbb{F}=\left(\F_t\right)_{t \geq 0}$ be natural filtration of $W$. The controlled stochastic processes, $X$ with state space $(c,d)$ ($-\infty \leq c<d \leq \infty$), is a continuous process, which is defined as the solution of
\begin{equation}\label{eq:sde}
dX_t=\mu(X_t, I(t))dt + \sigma(X_t, I(t))dW_t, \quad X_0=x,
\end{equation}
in which the right-continuous switching process $I$ is defined as
\begin{equation}
I(t)=I_0 1_{\{t<\tau_1\}}+I_1 1_{\{\tau_1 \leq t <\tau_2\}}+\cdots+I_{n}1_{\{\tau_n \leq t <\tau_{n+1}\}}+\cdots
\end{equation}
where $I_{i} \in \{0,1\}$ and $I_{i+1}=1-I_i$ for all $i \in \mathbb{N}$.
Here, the sequence $(\tau_n)_{n \geq 1}$ is an increasing sequence of $\mathbb{F}$-stopping times with $\lim_{n \rightarrow \infty}\tau_n=\tau_{c,d}$, almost surely (a.s.). Here, $\tau_{c,d} \triangleq \inf \{t \geq 0: X_t=c \;\text{or}\; X_t=d\}$. The stopping time $\tau_{c,d}=\infty$ when both $c$ and $d$ are natural boundaries. We will denote the set of such sequences by $\S$. We will assume that the boundaries are either absorbing or natural.

 We are going to measure the performance of a strategy \begin{equation*}
T=(\tau_1,\tau_2\cdots,\tau_n,\cdots)
\end{equation*}
by
\begin{equation}\label{eq:problem}
J^T(x,i)=\E^{x,i}\left[\int_0^{\tau_{c,d}}e^{-\alpha
s}f(X_s,I_s)ds-\sum_{j} e^{-\alpha
\tau_j}H(X_{\tau_{j}}, I_{j-1},I_j)\right],
\end{equation}
in which
 $H: (c,d)\times \{0,1\}^2\rightarrow \mathbb{R}$ is the immediate benefit/cost of switching from $I_{j-1}$ to $I_{j}$. We assume that $H$ is continuous in its first variable and
 \begin{equation}\label{eq:ass-H}
 |H(x,i,1-i)| \leq C_{H}(1+|x|) \quad \text{for $x,y \in (c,d)$ and $i \in\{0,1\}$},
 \end{equation}
 for some strictly positive constants $C_{H}<\infty$. Moreover, we assume that
 \begin{equation}\label{eq:asspH}
 H(x,0,1)+H(x,1,0)>0.
\end{equation}
We also assume that the running benefit $f: (c,d) \times \{0,1\}\rightarrow \R$ is a continuous function and
satisfies
the linear growth condition:
\begin{equation}\label{eq:ass-f}
|f(x,i)| \leq C_{f} (1+|x|),
\end{equation}
for some strictly positive constant $C_{f}<\infty$.
This assumption will be crucial in what follows, for example it guarantees that
\begin{equation} \label{eq:f-condition}
\E^{x,i}\left[\int_0^{\tau_{c,d}} e^{-\alpha s}|f(X_s,I_s)|ds\right]<B (1+|x|),
\end{equation}
for some $B$,
if we assume that the discount rate is large enough, which will be a standing assumption in the rest of our paper
(see page 5 of Pham \cite{pham06}).

The goal of the switching problem then is to find
\begin{eqnarray}\label{eq:problem-2}
v(x,i)\triangleq\sup_{T \in \mathcal{S}}J^{T}(x,i), \quad  x \in (c,d), \; i \in\{0,1\},
\end{eqnarray}
and also to find an optimal $T \in \mathcal{S}$ if it exists.
\subsection{When the Controller Can Switch Finitely Many Times}\label{subsec:recursive}
For any $\Fb$ stopping time $\sigma$ let us define
\begin{equation}
S^n_{\sigma} \triangleq  \{(\tau_1,\cdots, \tau_n): \text{$\tau_i$ is an $\Fb$ stopping time for all $i \in \{1,\cdots,n\}$ and}\; \sigma \leq \tau_1 \leq \cdots \leq \tau_n<\tau_{c,d}\}.
\end{equation}
In this section, we will consider switching processes of the form
\begin{equation}\label{eq:I}
I^{(n)}(t)=I_0 1_{\{t<\tau_1\}}+ \cdots+I_{n-1} 1_{\{\tau_{n-1} \leq t <\tau_n\}}+I_{n} 1_{\{t \geq \tau_n\}},
\end{equation}
in which the stopping times $(\tau_1, \cdots, \tau_n) \in S_{0}^n$. By $X^{(n)}$ we will denote the solution of (\ref{eq:sde}) when we replace $I$ with $I^{(n)}$. So with this notation we have that
\begin{equation}\label{eq:X-0}
dX^{(0)}_t=\mu\left(X^{(0)}_t, I_0\right)dt + \sigma \left(X^{(0)}_t, I_0\right)dW_t, \quad X^{(0)}_0=x.
\end{equation}
We assume a strong solution to \eqref{eq:X-0} exits and that
\begin{equation}\label{eq:assum:dyn}
 |\mu(x,i)|+|\sigma(x,i)| \leq C (1+|x|),
\end{equation}
for some positive constant $C<\infty$, which guarantees the uniqueness of the strong solution.
We should note that
\begin{equation}
X^{(n)}_t=X^{(0)}_t, \; t \leq \tau_1\;;\; \cdots \; X^{(n)}_t=X_{t}^{(n-1)},\; t \leq \tau_{n}.
\end{equation}

The value function of the problem in which the controller chooses $n$ switches is defined as
\begin{equation}\label{eq:q-function}
q^{(n)}(x,i)\triangleq\sup_{(\tau_1,\cdots,\tau_n) \in \S_0^n}\ME\left[\int_{0}^{\tau_{c,d}}e^{-\alpha s}f(X^{(n) }_s,I^{(n)}_s)ds -\sum_{j=1}^n
e^{-\alpha \tau_j}H(X^{(n)}_{\tau_{j}}, I_{j-1},I_j)\right].
\end{equation}

We will denote the value of making no switches by $q^{(0)}$, which we define as
\begin{equation}
q^{(0)}(x,i)\triangleq \ME\left[\int_{0}^{\tau_{c,d}}e^{-\alpha s}f(X^{(0) }_s,i)ds\right],
\end{equation}
which is well defined due to our assumption in (\ref{eq:f-condition}).

Let $\tau_y$ be the first hitting time of $y\in
\mathcal{I}$ by $X^{(0)}$, and let $c\in\mathcal{I}$ be a fixed point of
the state space.  We set:
\begin{align} \nonumber
\begin{aligned}
    \psi_i(x) &=  \begin{cases}
                 \ME[e^{-\alpha\tau_c}1_{\{\tau_c<\infty\}}], & x\leq c, \\
                 1/\E^{c,i}[e^{-\alpha\tau_x}1_{\{\tau_x<\infty\}}],
                 &x>c,\end{cases}
\hspace{0.4cm}
    \varphi_i(x) &= \begin{cases}
                 1/\E^{c,i}\left[e^{-\alpha\tau_x}1_{\{\tau_x<\infty\}}\right], & x\leq c, \\
                 \ME[e^{-\alpha\tau_c}1_{\{\tau_c<\infty\}}],
                 &x>c,
     \end{cases}
\end{aligned}
\end{align}
 It should be
noted that $\psi_i(\cdot)$ and $\varphi_i(\cdot)$ consist of an
increasing and a decreasing solution of the second-order
differential equation $(\mathcal{A}_i-\alpha)u=0$ in $\mathcal{I}$
where $\A_i$ is the infinitesimal generator of $X^{(0))}$ when $I_0=i$ in (\ref{eq:X-0}).  They
are linearly independent positive solutions and uniquely determined
up to multiplication.  For the complete characterization of the functions
$\psi_i(\cdot)$ and $\varphi_i(\cdot)$ corresponding to various types of
boundary behavior see It\^{o} and McKean \cite{ito-mc}. For future use let us define the increasing functions
\begin{equation}
F_i(x) \triangleq \frac{\psi_i(x)}{\varphi_i(x)}, \quad \text{and} \quad G_i(x)\triangleq -\frac{\varphi_i(x)}{\psi_i(x)},\quad x \in (c,d),\; i \in \{0,1\}.
\end{equation}
In terms of the Wronskian of $\psi_i(\cdot)$ and $\varphi_i(\cdot)$ by
\begin{equation}
W_i(x):=\psi'_i(x)\varphi_i(x)-\psi_i(x)\varphi'_i(x).
\end{equation}
we can express $q^{(0)}(x,i)$ as
\begin{equation}\label{eq:R-lr}
\begin{split}
q^{(0)}(x,i)=&\left[\psi_i(x)-\frac{\psi_i(c)}{\varphi_i(c)}\varphi_i(x)\right]
\int_x^{d} \frac{
\left[\varphi_i(y)-\frac{\varphi_i(d)}{\psi_i(d)}\psi_i(y)\right]}{\sigma^2(y,i)
W_i(y)} f(y,i) dy
\\&+\left(\varphi_i(x)-\frac{\varphi_i(d)}{\psi_i(d)}\psi_i(x)\right) \int_{c}^{x} \frac{
\left[\psi_i(y)-\frac{\psi_i(l)}{\varphi(c)
}\varphi(y)\right]}{\sigma^2(y,i) W_i(y)} f(y,i) dy,
\end{split}
\end{equation}
 $x \in (c,d)$,
see e.g. Karlin and Taylor \cite{karlin-taylor} pages 191-204 and Alvarez \cite{alvarez2}
page 272.

Now,
consider the following sequential optimal stopping problems:
\begin{equation}\label{eq:w-function}
w^{(n)}(x,i)\triangleq\sup_{\tau\in \S_{0}^{1}}\ME\left[\int_0^\tau
e^{-\alpha s}f(X^{(0)}_s,i)ds + e^{-\alpha\tau}
\left(w^{(n-1)}(X^{(0)}_{\tau},1-i)-H(X^{(0)}_{\tau}, i, 1-i)\right)\right]
\end{equation}
where $w^{(0)}(x,i)=q^{(0)}(x,i)$, $x \in (c,d)$ and $i \in \{0,1\}$.

\begin{lemma}\label{eq:main-result-1}
For $n\in \mathbb{N}$, we have that $q^{(n)}(x,i)=w^{(n)}(x,i)$, for all $x \in (c,d)$ and $i \in \{0,1\}$. Moreover, $q^{(n)}$ is continuous in the $x$-variable.
\end{lemma}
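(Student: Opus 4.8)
The plan is to argue by induction on $n$, establishing the identity $q^{(n)}=w^{(n)}$ and the continuity of $q^{(n)}$ in $x$ simultaneously, since the recursion \eqref{eq:w-function} feeds the value at stage $n-1$ into the terminal reward at stage $n$. The base case $n=0$ is immediate: $w^{(0)}=q^{(0)}$ holds by definition, and the explicit representation \eqref{eq:R-lr} exhibits $q^{(0)}(\cdot,i)$ as a continuous function of $x$, because $\psi_i,\varphi_i$ are continuous and the two integrals depend continuously on the limit $x$. So assume $q^{(n-1)}=w^{(n-1)}$ and that this common function is continuous in $x$.

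For the inequality $q^{(n)}\le w^{(n)}$, fix any admissible $(\tau_1,\dots,\tau_n)\in\S_0^n$ and decompose the reward in \eqref{eq:q-function} at the first switch time $\tau_1$. The running integral splits as $\int_0^{\tau_1}+\int_{\tau_1}^{\tau_{c,d}}$, the first switching cost separates as a standalone term $-e^{-\alpha\tau_1}H(X^{(0)}_{\tau_1},i,1-i)$, and after pulling out the factor $e^{-\alpha\tau_1}$ the remaining post-$\tau_1$ contribution is exactly the $(n-1)$-switch reward of the shifted strategy $(\tau_2,\dots,\tau_n)$ for the process restarted at $X^{(0)}_{\tau_1}$ in state $1-i$. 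Conditioning on $\F_{\tau_1}$ and invoking the strong Markov property bounds this conditional reward above by $q^{(n-1)}(X^{(0)}_{\tau_1},1-i)$. Taking expectations, then the supremum over $\tau_1\in\S_0^1$, and replacing $q^{(n-1)}$ by $w^{(n-1)}$ via the inductive hypothesis recovers the integrand on the right-hand side of \eqref{eq:w-function}, i.e. $q^{(n)}(x,i)\le w^{(n)}(x,i)$.

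For the reverse inequality $q^{(n)}\ge w^{(n)}$ I would paste together near-optimal strategies. Given $\eps>0$ and a stopping time $\tau$ that is $\eps$-optimal for the single-stage problem \eqref{eq:w-function}, I must continue after $\tau$ with an almost-optimal $(n-1)$-switch strategy for the diffusion restarted at $X^{(0)}_{\tau}$ in state $1-i$; the concatenated family of $n$ stopping times lies in $\S_0^n$ and, by the strong Markov property, its reward recovers the integrand of \eqref{eq:w-function} up to $O(\eps)$, so that letting $\eps\downarrow 0$ yields $q^{(n)}\ge w^{(n)}$. The main obstacle is to perform this continuation measurably: the $\eps$-optimal continuation must be selected as an $\F_{\tau}$-measurable functional of $X^{(0)}_{\tau}$. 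I would handle this through the essential-supremum characterization of the value, exploiting that the family of admissible rewards is upward directed, so the supremum is attained along a countable increasing sequence of strategies; a measurable exhaustion of the state space then allows these countably many continuations to be glued into a single admissible strategy, with the continuity of $q^{(n-1)}$ controlling the approximation error across the pieces.

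Finally, continuity of $q^{(n)}(\cdot,i)=w^{(n)}(\cdot,i)$ follows from optimal stopping theory. By the inductive hypothesis the terminal reward $x\mapsto w^{(n-1)}(x,1-i)-H(x,i,1-i)$ in \eqref{eq:w-function} is continuous, since $q^{(n-1)}$ is continuous and $H$ is continuous in its first argument. The value of an optimal stopping problem for the one-dimensional diffusion $X^{(0)}$ with a continuous reward is again continuous in the initial point, which I would obtain from the excessive/concave characterization recorded in Lemma~\ref{lem:nlemm}: after the transformation through $F_i$, the value becomes the smallest concave majorant of a continuous function, and such a majorant is continuous. This closes the induction.
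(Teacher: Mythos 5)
Your proposal is correct and follows essentially the same route as the paper: the inequality $q^{(n)}\le w^{(n)}$ by splitting the reward at the first switching time and applying the strong Markov property, the reverse inequality by concatenating a near-optimal first stopping time with near-optimal continuations, the upward-directedness of the family of conditional rewards together with the essential supremum and monotone convergence to justify the concatenation, and optimal stopping theory for Markov processes to get continuity. The one place where you diverge is the gluing step: you propose to select the $\eps$-optimal continuation as a measurable functional of $X^{(0)}_{\tau}$ via a measurable exhaustion of the state space, which is the most delicate part of your sketch. The paper avoids any such selection by working throughout with the conditional value process $Z^{(n-1)}_{\tau_1}=\text{ess sup}$ over continuations in $\S^{n-1}_{\tau_1}$ (which are allowed to depend on all of $\mathcal{F}_{\tau_1}$, not just on $X^{(0)}_{\tau_1}$), extracting from the directed family a single increasing maximizing sequence and passing to the limit by monotone convergence; the identification $Z^{(n-1)}_{\tau_1}=e^{-\alpha\tau_1}q^{(n-1)}(X^{(0)}_{\tau_1},I_1)$ is then established separately via the strong Markov property and Fakeev's theorem (whose integrability hypothesis, the finiteness of $\mathbb{E}^{x,i}\bigl[\sup_{t}e^{-\alpha t}(\cdot)^{-}\bigr]$, you should verify from the growth conditions \eqref{eq:ass-H} and \eqref{eq:ass-f} — a point your sketch leaves implicit). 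This reformulation is what makes the argument clean, and your continuity argument via the smallest concave majorant is in substance the same appeal to Dayanik--Karatzas that the paper makes.
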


\begin{proof}
See Appendix
\end{proof}

\subsection{Characterization of the Optimal Switching Problem as Two Coupled Optimal Stopping Problems}

Using the results of the previous section, here we will show that the optimal switching problem can be converted into two coupled optimal stopping problems.
\begin{coro}\label{lem:bef-main-result-2}
For all $x \in (c,d)$ and $i \in \{0,1\}$, the increasing sequence $(q^{(n)}(x,i))_{n \in \N}$ converges:
\begin{equation}
\lim_{n \rightarrow \infty}q^{(n)}(x,i)=v(x,i).
\end{equation}
Moreover, $v$ is continuous in the $x$-variable.
\end{coro}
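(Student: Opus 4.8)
The plan is to combine three ingredients -- monotonicity, a uniform upper bound, and a truncation argument -- and then to upgrade pointwise convergence to locally uniform convergence, so that continuity of $v$ comes essentially for free. First I would record that $(q^{(n)}(x,i))_n$ is nondecreasing. By Lemma~\ref{eq:main-result-1} we may work with the recursively defined $w^{(n)}$ of \eqref{eq:w-function}. The operator sending $g$ to $\sup_{\tau}\ME[\int_0^\tau e^{-\alpha s}f(X^{(0)}_s,i)\ud s + e^{-\alpha\tau}(g(X^{(0)}_\tau,1-i)-H(X^{(0)}_\tau,i,1-i))]$ is clearly monotone in $g$, so it suffices to check $w^{(1)}\ge w^{(0)}=q^{(0)}$; this follows by letting the stopping time tend to $\tau_{c,d}$, since the running-reward term then recovers $q^{(0)}$ while the boundary term is negligible because of the discount factor. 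Next, \eqref{eq:f-condition} together with the positivity of round-trip switching costs \eqref{eq:asspH} (which caps the net switching gain) yields a bound $q^{(n)}(x,i)\le B(1+|x|)$ uniform in $n$, so the increasing sequence has a finite limit $\bar v(x,i)\le v(x,i)$.

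To identify $\bar v$ with $v$ I would prove the two inequalities separately. For $q^{(n)}\le v$, embed any $(\tau_1,\dots,\tau_n)\in\S_0^n$ into $\S$ by appending further switch times increasing to $\tau_{c,d}$; because of the discount factor $e^{-\alpha\tau_j}$ these extra switches contribute negligibly to \eqref{eq:problem}, so the resulting $J^T$ approximates the $n$-switch objective and hence $q^{(n)}\le v$. For the reverse inequality, fix $T=(\tau_1,\tau_2,\dots)\in\S$ and truncate it to $T_n=(\tau_1,\dots,\tau_n)\in\S_0^n$, so that $J^{T_n}(x,i)\le q^{(n)}(x,i)$. Since $X$ and $X^{(n)}$ agree up to $\tau_n$ and $\tau_n\uparrow\tau_{c,d}$, the difference $J^T-J^{T_n}$ splits into a running-reward tail $\ME[\int_{\tau_n}^{\tau_{c,d}} e^{-\alpha s}(\dots)\ud s]$ and a switching-cost tail $\ME[\sum_{j>n}e^{-\alpha\tau_j}H(\dots)]$. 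The first tends to $0$ by \eqref{eq:f-condition} and dominated convergence, and the second tends to $0$ because any strategy contributing to the supremum has a convergent, hence vanishing-tail, total discounted switching cost. Thus $J^T=\lim_n J^{T_n}\le\bar v$, and taking the supremum over $T$ gives $v\le\bar v$, so $v=\bar v$.

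Finally, continuity. Each $q^{(n)}$ is continuous in $x$ by Lemma~\ref{eq:main-result-1}, so $v=\sup_n q^{(n)}$ is automatically lower semicontinuous. To get full continuity I would show that the convergence $q^{(n)}\to v$ is uniform on compact subsets $K\subset(c,d)$, since a locally uniform limit of continuous functions is continuous. The estimates above already bound $v-q^{(n)}$ by the two tails, evaluated along a near-optimal $T$. The main obstacle is to make these tails small uniformly in $x\in K$ and uniformly over near-optimal strategies: the reward tail is controlled locally uniformly by \eqref{eq:f-condition}, but the switching-cost tail requires a uniform cap on the effective number of profitable switches. Here I expect \eqref{eq:asspH} to be decisive -- since every completed round trip costs a strictly positive amount while the discounted future gain is bounded through \eqref{eq:f-condition}, a near-optimal strategy can make only boundedly many worthwhile switches before $\tau_n$ is forced close to $\tau_{c,d}$ -- and this is what should deliver the uniform smallness of $v-q^{(n)}$ on $K$, hence the continuity of $v$.
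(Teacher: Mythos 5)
Your proposal follows essentially the same route as the paper: monotonicity of $q^{(n)}$, truncation of a (near-)optimal strategy $T\in\S$ to a finite strategy in $\S_0^n$, vanishing of the running-reward and switching-cost tails as $\tau_n\uparrow\tau_{c,d}$, and continuity of $v$ via locally uniform convergence of the continuous functions $q^{(n)}$. The only differences are cosmetic (you derive monotonicity from the $w^{(n)}$ recursion rather than from $\S_0^n\subseteq\S_0^{n+1}\subseteq\S$, and you invoke the linear-growth bound where the paper instead treats $v=\infty$ as a separate case), and you are in fact more candid than the paper about the one step that needs care, namely the uniformity in $x$ of the tail estimates, which the paper simply asserts as ``clear from our proof.''
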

\begin{proof}
Since $\S_{\sigma}^{n} \subseteqq \S_{\sigma}^{n+1} \subseteqq \S$, it follows that $(q^{(n)}(x,i))_{n \in \N}$ is a non-decreasing sequence and
\begin{equation}\label{eq:lim-q-n}
\lim_{n \rightarrow \infty}q^{(n)}(x,i) \leq v(x,i), \quad x \in (c,d), \; i \in \{0,1\}.
\end{equation}

Assume that $v(x,i)<\infty$.
Let us fix $x$ and $i$. For a given $\eps>0$, let $T=(\tau_1,\cdots, \tau_n, \cdots) \in \S$ be an $\eps$-optimal strategy, i.e.,
\begin{equation}\label{eq:eps-opt-cont}
J^{T}(x,i)\geq v(x,i)-\eps.
\end{equation}
Note that $T$ depends on $x$.
Now $T^{(n)} \triangleq (\tau_1,\cdots, \tau_n)\in \S_{0}^{(n)}$, and
\begin{equation}
X^{(n)}_t=X_t, \quad \text{and} \quad I^{(n)}_t=I_t, \quad t \leq \tau_n.
\end{equation}
Let $\tau_{c,d}$ be the smallest time that $X$ reaches $c$ or $d$, and $\tau_{c,d}^{(n)}$ be the smallest time $X^{(n)}$ reaches $c$ or $d$.

Since $\tau_{n} \rightarrow \tau_{c,d}$ as $n \rightarrow \infty$, almost surely,  it follows from the growth assumptions on $f$ and $H$ that
\begin{equation}\label{eq:f-eps}
\ME\left[\int_{\tau_n}^{\tau_{c,d}}e^{-\alpha t}|f(X_t,I_t)|dt+\int_{\tau_n}^{\tau^{(n)}_{c,d}}e^{-\alpha t}|f(X_t,I_t)|dt\right] <\eps,
\end{equation}
and
\begin{equation}\label{eq:H-eps}
\ME \left[\sum_{j>n}e^{-\alpha \tau_j}H(X_{\tau_j},I_{j-1},I_j)\right]<\eps.
\end{equation}
for large enough $n$. It follows from (\ref{eq:f-eps}) and (\ref{eq:H-eps}) that
\begin{equation}
\begin{split}
\liminf_{n \rightarrow \infty} J^{T^{(n)}}(x,i)&=\liminf_{n \rightarrow \infty}\ME\left[\int_{0}^{\tau_{c,d}}e^{-\alpha s}f(X^{(n) }_s,I^{(n)}_s)ds -\sum_{j=1}^n
e^{-\alpha \tau_j}H(X^{(n)}_{\tau_{j}}, I_{j-1},I_j)\right]
\\&\geq J^{T}(x,i)-2 \eps.
\end{split}
\end{equation}
Therefore, using (\ref{eq:eps-opt-cont}) we get
\begin{equation}
\liminf_{n \rightarrow \infty} q^{(n)}(x,i) \geq \liminf_{n \rightarrow \infty} J^{T^{(n)}}(x,i) \geq v(x,i)- 3 \eps.
\end{equation}
Since $\eps$ is arbitrary, this along with (\ref{eq:lim-q-n}) yields the proof of the corollary when $v(x,i)<\infty$.

When $v(x,i)=\infty$, then for each positive constant $B<\infty$, there exists $T \in \S$ such that
$J^{T}(x,i) \geq B$. Then, if we choose $T^{(n)}\in S_{0}^n$ as before with $\eps=1$, we get $J^{T^{(n)}} \geq B-2$, which leads to
\begin{equation}
 \liminf_{n \rightarrow \infty} q^{(n)}(x,i) \geq \liminf_{n \rightarrow \infty}J^{T^{(n)}} \geq B-2.
\end{equation}
Since $B$ is arbitrary, we have that
\begin{equation}
\lim_{n \rightarrow \infty}q^{(n)}(x,i)=\infty.
\end{equation}
It is clear from our proof that $q^{(n)}(x,i)$ converges to $v(x,i)$ locally uniformly. Since $x \rightarrow q^{(n)}(x,i)$ is continuous, the continuity of $x \rightarrow v(x,i)$ follows.

\end{proof}
The next  result shows that the optimal switching problem is equivalent to solving two coupled optimal stopping problems.
\begin{proposition}\label{prop:representation}
The value function of the optimal switching problem has the following representation for any $x \in (c,d)$ and $i \in \{0,1\}$:
\begin{equation}
v(x,i)=\sup_{\tau \in S_0^1} \ME \left[\int_0^\tau
e^{-\alpha s}f(X^{(0)}_s,i)ds + e^{-\alpha\tau}
\left(v(X^{(0)}_{\tau},1-i)-H(X^{(0)}_{\tau}, i, 1-i)\right)\right],
\end{equation}
which can also be written as
\begin{equation}\label{eq:v-opt-st}
v(x,i)=q^{(0)}(x,i)+\sup_{\tau \in S_0^1} \ME \left[ e^{-\alpha\tau}
\left(-q^{(0)}(X^{(0)}_{\tau},i)+v(X^{(0)}_{\tau},1-i)-H(X^{(0)}_{\tau}, i, 1-i)\right)\right],
\end{equation}
due to the strong Markov property of $X^{(0)}$.
\end{proposition}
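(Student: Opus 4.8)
The plan is to pass to the limit in the one-step recursion furnished by Lemma~\ref{eq:main-result-1}. I introduce the operator
\begin{equation*}
(\M g)(x,i) \triangleq \sup_{\tau\in \S_{0}^{1}}\ME\left[\int_0^\tau e^{-\alpha s}f(X^{(0)}_s,i)\ud s + e^{-\alpha\tau}\left(g(X^{(0)}_{\tau},1-i)-H(X^{(0)}_{\tau}, i, 1-i)\right)\right],
\end{equation*}
acting on functions $g$ of linear growth. By Lemma~\ref{eq:main-result-1} we have $w^{(n)}=q^{(n)}$, so the defining relation \eqref{eq:w-function} becomes the recursion $q^{(n)}=\M q^{(n-1)}$ for $n\geq 1$. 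The first asserted identity is precisely the fixed-point equation $v=\M v$, so it suffices to show that $\M$ commutes with the increasing limit $q^{(n-1)}\uparrow v$ supplied by Corollary~\ref{lem:bef-main-result-2}. Throughout I treat the case $v(x,i)<\infty$; the case $v\equiv\infty$ is handled as in the proof of Corollary~\ref{lem:bef-main-result-2}.

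For the inequality $v\leq \M v$ I use that $\M$ is monotone: $g_1\leq g_2$ forces $\M g_1\leq \M g_2$, since the bracketed integrand increases pointwise while the supremum runs over the common index set $\S_0^1$. Because $q^{(n-1)}\leq v$ for every $n$, monotonicity gives $q^{(n)}=\M q^{(n-1)}\leq \M v$, and letting $n\to\infty$ with $q^{(n)}\uparrow v$ yields $v\leq \M v$.

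The reverse inequality $\M v\leq v$ is the main point. I fix $\tau\in \S_0^1$ and set
\begin{equation*}
Y_n \triangleq \int_0^\tau e^{-\alpha s}f(X^{(0)}_s,i)\ud s + e^{-\alpha\tau}\left(q^{(n-1)}(X^{(0)}_{\tau},1-i)-H(X^{(0)}_{\tau}, i, 1-i)\right).
\end{equation*}
Since $q^{(n-1)}$ is nondecreasing in $n$, the sequence $(Y_n)$ increases to the random variable obtained by replacing $q^{(n-1)}$ with $v$. The growth bound \eqref{eq:ass-H} on $H$, the linear growth of $q^{(0)}$ (immediate from \eqref{eq:f-condition}), and \eqref{eq:f-condition} itself make $Y_1$ integrable, so the monotone convergence theorem applied to $Y_n-Y_1\geq 0$ gives
\begin{equation*}
\ME\left[\int_0^\tau e^{-\alpha s}f(X^{(0)}_s,i)\ud s + e^{-\alpha\tau}\left(v(X^{(0)}_{\tau},1-i)-H(X^{(0)}_{\tau}, i, 1-i)\right)\right]=\lim_{n\to\infty}\ME[Y_n].
\end{equation*}
Each $\ME[Y_n]$ is the $n$-th objective evaluated at this single $\tau$, hence at most $\M q^{(n-1)}(x,i)=q^{(n)}(x,i)\leq v(x,i)$; so the limit, i.e. the left-hand side, is at most $v(x,i)$. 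Taking the supremum over $\tau\in \S_0^1$ gives $\M v\leq v$, and together with the previous paragraph this proves $v=\M v$.

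Finally, I obtain the equivalent form \eqref{eq:v-opt-st} by isolating the running term. Applying the strong Markov property of $X^{(0)}$ to $q^{(0)}$ at the stopping time $\tau$ gives the decomposition
\begin{equation*}
\ME\left[\int_0^\tau e^{-\alpha s}f(X^{(0)}_s,i)\ud s\right]=q^{(0)}(x,i)-\ME\left[e^{-\alpha\tau}q^{(0)}(X^{(0)}_{\tau},i)\right];
\end{equation*}
substituting this into $v=\M v$ and pulling the $\tau$-independent term $q^{(0)}(x,i)$ out of the supremum yields \eqref{eq:v-opt-st}. I expect the only delicate step to be the interchange of limit and supremum in the third paragraph, which is exactly where the monotone convergence theorem and the linear-growth hypotheses \eqref{eq:ass-H}, \eqref{eq:ass-f}, \eqref{eq:f-condition} enter.
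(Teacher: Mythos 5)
Your proof is correct and follows essentially the same route as the paper: both establish $v=\M v$ by passing to the limit in the recursion $q^{(n)}=\M q^{(n-1)}$ from Lemma~\ref{eq:main-result-1}, using monotonicity for $v\leq\M v$ and the monotone convergence theorem for $\M v\leq v$. The only cosmetic difference is that you bound the objective at each fixed $\tau$ and then take the supremum, whereas the paper works with an $\eps$-optimal $\ttau$; these are interchangeable.
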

\begin{proof}
First note that
\begin{equation}
w^{(n)}(x,i) \uparrow v(x,i), \quad \text{as $n \rightarrow \infty$},
\end{equation}
as a result of Proposition~\ref{eq:main-result-1} and Lemma~\ref{lem:bef-main-result-2}.
Therefore, it follows from (\ref{eq:w-function}) that
\begin{equation}
w^{(n)}(x,i) \leq \sup_{\tau \in S_0^1} \ME \left[\int_0^\tau
e^{-\alpha s}f(X^{(0)}_s,i)ds + e^{-\alpha\tau}
\left(v(X^{(0)}_{\tau},1-i)-H(X^{(0)}_{\tau}, i, 1-i)\right)\right].
\end{equation}

To obtain the opposite inequality let us choose $\ttau$ such that
\begin{equation}
\begin{split}
 \ME &\left[\int_0^{\ttau}
e^{-\alpha s}f(X^{(0)}_s,i)ds + e^{-\alpha\ttau}
\left(v(X^{(0)}_{\ttau},1-i)-H(X^{(0)}_{\ttau}, i, 1-i)\right)\right]
\\&\geq \sup_{\tau \in S_0^1} \ME \left[\int_0^\tau
e^{-\alpha s}f(X^{(0)}_s,i)ds + e^{-\alpha\tau}
\left(v(X^{(0)}_{\tau},1-i)-H(X^{(0)}_{\tau}, i, 1-i)\right)\right]-\eps.
\end{split}
\end{equation}
Then by the monotone convergence theorem
\begin{equation}
\begin{split}
&v(x,i)=\lim_{n \rightarrow \infty}w^{(n)}(x,i)
\\& \geq \lim_{n \rightarrow \infty} \ME\left[\int_0^{\ttau}
e^{-\alpha s}f(X^{(0)}_s,i)ds + e^{-\alpha\ttau}
\left(w^{(n-1)}(X^{(0)}_{\ttau},1-i)-H(X^{(0)}_{\ttau}, i, 1-i)\right)\right]
\\&=\ME \left[\int_0^{\ttau}
e^{-\alpha s}f(X^{(0)}_s,i)ds + e^{-\alpha\ttau}
\left(v(X^{(0)}_{\ttau},1-i)-H(X^{(0)}_{\ttau}, i, 1-i)\right)\right]
\\& \geq \sup_{\tau \in S_0^1} \ME \left[\int_0^\tau
e^{-\alpha s}f(X^{(0)}_s,i)ds + e^{-\alpha\tau}
\left(v(X^{(0)}_{\tau},1-i)-H(X^{(0)}_{\tau}, i, 1-i)\right)\right]-\eps.
\end{split}
\end{equation}
This proves the statement of the proposition.
\end{proof}

\begin{remark}
\begin{itemize}
\item[(i)]
It is clear that the result of the previous proposition holds even for finite horizon problems, which can be shown by making slight modifications (by setting the cost functions to be equal to zero after the maturity) to the proofs above.

\item[(ii)]Also, if there are more than two regimes the controller can choose from \eqref{eq:v-opt-st} can be modified to read
\begin{equation}\label{eq:v-opt-st-n}
v(x,i)=q^{(0)}(x,i)+\sup_{\tau \in S_0^1} \ME \left[ e^{-\alpha\tau}
\left(-q^{(0)}(X^{(0)}_{\tau},i)+\mathcal{M}v(X^{(0)}_{\tau},i)\right)\right],
\end{equation}
where
\begin{equation}
\mathcal{M} v(x,i)= \max_{j \in (\mathcal{I}-\{i\})}(v(x,j)-H(x, i, j)),
\end{equation}
and $\mathcal{I}$ is the set of regimes.
\end{itemize}
\end{remark}

\subsection{A Class of Optimal Stopping Times}
In this section, using the classical theory of optimal stopping times, we will show that hitting times of certain kind are optimal. We will first show that the assumed growth condition on $f$ and $H$ leads to a growth condition on the value function $v$, from which we can conclude that $v$ is finite on $(c,d)$.
\begin{lemma}\label{lem:2.3}
There exists a constant $C_v$ such that
\begin{equation}\label{lem:lin-grwth}
v(x,i) \leq C_{v}(1+|x|), \quad x \in (c,d), \; i \in \{0,1\}.
\end{equation}
In fact, the same holds for all $q^{(n)}$, $n \in \N$.
\end{lemma}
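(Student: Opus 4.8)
My plan is to prove the bound for all $q^{(n)}$ uniformly in $n$ and then pass to the limit: by Corollary~\ref{lem:bef-main-result-2} the sequence $q^{(n)}$ increases to $v$, so any constant that bounds every $q^{(n)}$ also bounds $v$. Since $q^{(n)}=w^{(n)}$ by Lemma~\ref{eq:main-result-1}, I would argue by induction through the recursion \eqref{eq:w-function}. First I record the base estimate: the no-switch value obeys $|q^{(0)}(x,i)|\le B(1+|x|)$ by \eqref{eq:f-condition}, the constant $B$ being uniform in $i$ because the growth bound \eqref{eq:assum:dyn} is uniform in the regime. Rewriting \eqref{eq:w-function} in the form \eqref{eq:v-opt-st} via the strong Markov property and setting $\delta^{(n)}:=q^{(n)}-q^{(0)}$ (which is nonnegative and nondecreasing in $n$ since $q^{(n)}\ge q^{(0)}$), the excess solves
\[
\delta^{(n)}(x,i)=\sup_{\tau\in S_0^1}\E^{x,i}\!\left[e^{-\alpha\tau}\left(\delta^{(n-1)}(X^{(0)}_\tau,1-i)+g(X^{(0)}_\tau,i)\right)\right],\qquad \delta^{(0)}\equiv 0,
\]
where $g(x,i):=q^{(0)}(x,1-i)-q^{(0)}(x,i)-H(x,i,1-i)$ has linear growth, $|g(x,i)|\le(2B+C_H)(1+|x|)$, by the above and \eqref{eq:ass-H}. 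The single identity that drives everything is $g(x,0)+g(x,1)=-\bigl(H(x,0,1)+H(x,1,0)\bigr)<0$, which is exactly the positivity assumption \eqref{eq:asspH}.

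The induction then reduces to producing one nonnegative function $\Theta(x,i)$ of linear growth that is a \emph{supersolution} of the coupled operator on the right-hand side, i.e.
\[
\Theta(x,i)\;\ge\;\sup_{\tau\in S_0^1}\E^{x,i}\!\left[e^{-\alpha\tau}\left(\Theta(X^{(0)}_\tau,1-i)+g(X^{(0)}_\tau,i)\right)\right],\qquad i\in\{0,1\}.
\]
Granting such a $\Theta$, a one-line induction gives $\delta^{(n)}\le\Theta$ for every $n$: the base case is $\delta^{(0)}=0\le\Theta$, and the inductive step is immediate from the monotonicity of the operator together with the displayed supersolution inequality. Hence $v-q^{(0)}=\lim_n\delta^{(n)}\le\Theta$, so $v(x,i)\le q^{(0)}(x,i)+\Theta(x,i)\le C_v(1+|x|)$; the same chain, stopped at finite $n$, yields the claim for each $q^{(n)}$.

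To construct $\Theta$ I would take each $\Theta(\cdot,i)$ to be an $\alpha$-excessive majorant (for the regime-$i$ diffusion) of the common obstacle $G^+$, where $G(x):=\max(g(x,0),g(x,1))$. Linear growth is automatic: $G^+\le|g|\le(2B+C_H)(1+|x|)$, and for the discount rate taken large enough (our standing assumption) the process $e^{-\alpha t}\sqrt{1+(X^{(0)}_t)^2}$ can be shown to be a supermartingale uniformly in the regime by \eqref{eq:assum:dyn}, whence $\sup_{\tau}\E^{x,i}[e^{-\alpha\tau}(1+|X^{(0)}_\tau|)]\le K(1+|x|)$ and every such majorant inherits linear growth. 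Using only the excessiveness property $\Theta(x,i)\ge\E^{x,i}[e^{-\alpha\tau}\Theta(X^{(0)}_\tau,i)]$, the supersolution inequality follows the moment the pointwise \emph{window condition} $g(x,0)\le\Theta(x,0)-\Theta(x,1)\le -g(x,1)$ holds, and this window is nonempty at every point precisely because $g(x,0)\le -g(x,1)$, i.e. by \eqref{eq:asspH}.

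The hard part is the last step: arranging a single pair $\Theta(\cdot,0),\Theta(\cdot,1)$ that is simultaneously $\alpha$-excessive for each regime's own dynamics \emph{and} whose difference threads the positivity window $[g(x,0),-g(x,1)]$ at every $x$. This is exactly where \eqref{eq:asspH} is indispensable: without the sign constraint $g(x,0)+g(x,1)<0$ the controller could harvest a negative switching cost on every round trip, the window would be empty, and $\delta^{(n)}$ would in fact grow linearly in $n$ rather than staying bounded. I expect to obtain the two majorants from the common obstacle $G^+$ and then correct their difference into the window, controlling the correction by the supermartingale estimate above; verifying that this correction preserves excessiveness—rather than arguing by a pointwise variational inequality, which is unavailable since $H$, and hence $g$, is only assumed continuous—is the delicate technical core of the argument.
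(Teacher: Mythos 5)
Your proposal is not a proof: the step you yourself flag as ``the delicate technical core''---constructing a nonnegative pair $\Theta(\cdot,0),\Theta(\cdot,1)$ of linear growth, each $\alpha$-excessive for its own regime's dynamics, whose difference lies pointwise in the window $[g(x,0),-g(x,1)]$---is never carried out. Everything else is conditional on this object, and producing it is genuinely hard: the two regimes have different generators, so prescribing the difference $\Theta(\cdot,0)-\Theta(\cdot,1)$ (which must track the merely continuous functions $g(\cdot,i)$ pointwise) while simultaneously preserving excessiveness for both diffusions is a coupled problem essentially as difficult as the switching problem you are trying to bound; taking two separate excessive majorants of the common obstacle $G^+$ gives no control whatsoever on their difference, and the promised ``correction'' is exactly the missing argument. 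As written, you have shown only that \emph{if} such a $\Theta$ exists then the lemma follows. (A smaller slip: an arbitrary excessive majorant of $G^+$ need not have linear growth; only the smallest one is controlled by your supermartingale estimate.)

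The paper's proof is far more elementary and does not engage with the switching structure at all. It bounds $v(x,i)\le\sup_{T\in\S}\E^{x,i}\bigl[\int_0^\infty e^{-\alpha t}|f(X_t,I_t)|\,dt\bigr]$, discarding the switching-cost sum, and then controls the right-hand side uniformly over all controls by the second-moment estimate $\E^{x,i}[X_t^2]\le Ce^{Ct}(1+|x|^2)$ (a consequence of the linear growth of the coefficients \eqref{eq:assum:dyn}, uniform in the regime), Jensen's inequality, and the standing assumption that $\alpha$ is large; the identical estimate applies to each $q^{(n)}$. If you want to complete your route you would either have to finish the supersolution construction or replace it by a direct control of the cost sum; alternatively, the uniform moment bound gives the result in a few lines and is where you should have started.
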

\begin{proof}
As in Pham \cite{pham06}
due to the linear growth condition on $b$ and $\sigma$, the process $X$ defined in (\ref{eq:sde}) satisfies the second moment estimate
\begin{equation}
\ME\left[X_{t}^2\right] \leq C e^{Ct} (1+|x|^2),
\end{equation}
for some positive constant $C$. Due to the linear growth assumption on $f$ we have that
\begin{equation}\label{eq:v-bdd}
\begin{split}
\ME \left[\int_0^{\infty}e^{-\alpha t}|f(X_t,I_t)|dt \right] & \leq C_f \, \ME \left[\int_0^{\infty}e^{-\alpha t}(1+|X_t|)dt\right] \\&\leq
\sqrt{C} C_f \int_0^{\infty}e^{-\alpha t}e^{Ct/2}(1+|x|)dt \leq C_v (1+|x|),
\end{split}
\end{equation}
for some large enough constant $C_v$. Here the second inequality follows from the Jensen's inequality and the fact that $\sqrt{(1+|x|)^2} \leq 1+|x|$. Also recall that we have assumed the discount factor $\alpha$ to be large enough.
(This is similar to the assumption in Pham~\cite{pham06}). Taking the supremum over $T \in \S$ in (\ref{eq:v-bdd}) we obtain that
\begin{equation}
v(x,i) \leq \sup_{T \in \S} \ME \left[\int_0^{\infty}e^{-\alpha t}|f(X_t,I_t)|dt \right]\leq C_v (1+|x|).
\end{equation}
The linear growth of $q^{(n)}$ can be shown similarly.
\end{proof}

\begin{proposition}\label{prop:hitting-times}
Let us define
\begin{equation}
\mathbf{\Gamma^{i}} \triangleq \{x \in (c,d): v(x,i)=v(x,1-i)-H(x,i,1-i)\}, \quad i \in \{0,1\}.
\end{equation}
Let us assume that $c=0$ and $d=\infty$ and the following one of the two hold:
\begin{enumerate}
\item $c$ is absorbing, and $d$ is natural,
\item Both $c$ and $d$ are natural.
\end{enumerate}
Then if for $i \in \{0,1\}$, $\lim_{x \rightarrow \infty}x/\psi_i(x)=0$, the stopping times
\begin{equation}\label{eq:opt-st-time}
\tau^{*,i} \triangleq \inf\{t \geq 0: X^{(0)}_t \in \Gamma^{i}\},
\end{equation}
are optimal. Note that $X^{(0)}$ in (\ref{eq:X-0}) depends on $I_0=i$, through its drift and volatility.
\end{proposition}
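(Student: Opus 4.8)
The plan is to reduce the switching problem, via the representation in Proposition~\ref{prop:representation}, to a standard optimal stopping problem and then invoke the classical theory to show that the first hitting time of the stopping region $\Gamma^i$ is optimal. Fix $i \in \{0,1\}$ and consider
\begin{equation*}
v(x,i)=q^{(0)}(x,i)+\sup_{\tau \in S_0^1} \ME \left[ e^{-\alpha\tau}
g_i(X^{(0)}_{\tau})\right],
\end{equation*}
where $g_i(y) \triangleq -q^{(0)}(y,i)+v(y,1-i)-H(y,i,1-i)$ is the reward of the embedded stopping problem. The set $\Gamma^i$ is precisely the set on which $v(\cdot,i)$ meets its obstacle, i.e.\ where $g_i(X^{(0)}_\tau)$ is realized as the value; so $\Gamma^i = \{x : v(x,i)=v(x,1-i)-H(x,i,1-i)\}$ is the stopping region for this problem, and $\tau^{*,i}$ is its hitting time. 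I would first establish that $g_i$ is continuous (which follows from continuity of $q^{(0)}$, $v$ in $x$ by Lemma~\ref{eq:main-result-1} and Corollary~\ref{lem:bef-main-result-2}, and continuity of $H$), and that $\Gamma^i$ is closed, so that $\tau^{*,i}$ is indeed a hitting time of a closed set and hence an $\Fb$-stopping time.

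The main step is to verify the hypotheses of the classical optimal-stopping verification theorem (as in Dayanik and Karatzas \cite{DK2003} or Dynkin \cite{Dynkin}). Concretely, I would show that the process
\begin{equation*}
Y_t \triangleq e^{-\alpha t}\parens{v(X^{(0)}_t,i)-q^{(0)}(X^{(0)}_t,i)}
= \sup_{\tau \in S_0^1} \ME\bracks{e^{-\alpha \tau} g_i(X^{(0)}_\tau)\,\big|\,\F_t}\Big/\, \cdots
\end{equation*}
is the smallest nonnegative supermartingale (the Snell envelope of $e^{-\alpha t}g_i(X^{(0)}_t)$ after the $q^{(0)}$ shift) dominating the reward, using the Markovian structure of $X^{(0)}$. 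The value $v(\cdot,i)-q^{(0)}(\cdot,i)$ is $\alpha$-excessive, and stopping at $\tau^{*,i}$ realizes the reward exactly because on $\Gamma^i$ one has $v(x,i)-q^{(0)}(x,i)=g_i(x)$. The classical theory then gives that the first entry time into the region where the value equals the reward is optimal, provided one can rule out mass escaping to the boundary.

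The hard part will be the boundary/integrability control, which is exactly where the hypotheses $c=0$, $d=\infty$, the absorbing/natural boundary classification, and the growth condition $\lim_{x\to\infty} x/\psi_i(x)=0$ enter. I would use the linear growth bound on $v$ from Lemma~\ref{lem:2.3} together with $|H|\le C_H(1+|x|)$ to control $g_i$, and then show that the family $\{e^{-\alpha\tau}g_i(X^{(0)}_\tau)\}_\tau$ is uniformly integrable so that optimal stopping over $S_0^1$ is legitimate and no value is lost in the limit $\tau\to\tau_{c,d}$. The condition $x/\psi_i(x)\to 0$ guarantees that near the natural boundary $d=\infty$ the excessive majorant does not pick up spurious contributions from $\psi_i$: since $\psi_i$ grows faster than the linear obstacle, the increasing fundamental solution is not part of the optimal value near $d$, forcing the continuation value to come from $\varphi_i$ and hence stopping (or decay) at infinity. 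This is the step I expect to require the most care, since it is where one must confirm that $\tau^{*,i}$ does not run off to $\tau_{c,d}$ on a set of positive probability where the payoff fails to converge; the boundary assumptions and the growth hypothesis are designed precisely to close this gap.
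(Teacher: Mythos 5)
Your proposal follows essentially the same route as the paper: reduce to the embedded optimal stopping problem via Proposition~\ref{prop:representation}, identify $\Gamma^i$ as the stopping region, and use the Dayanik--Karatzas characterization together with the linear growth of $v$, $q^{(0)}$, $H$ and the hypothesis $\lim_{x\to\infty}x/\psi_i(x)=0$ to show the reward vanishes relative to $\psi_i$ at the natural boundary $d=\infty$ (the paper's condition $l^i_d=0$, via their Proposition 5.7). The only detail you leave implicit is the analogous check at $c=0$ when it is natural, namely $l^i_c=\lim_{x\to c}(v(x,1-i)-q^{(0)}(x,i)-H(x,i,1-i))^{+}/\varphi_i(x)=0$, which the paper disposes of in one line since the reward is bounded near $0$ while $\varphi_i(x)\to\infty$.
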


\begin{proof}
Let us prove the statement for Case 1.
First, we define
\begin{equation}\label{eq:l-d}
l^{i}_{d} \triangleq \lim_{x \rightarrow d}\frac{(v(x,1-i)-q^{(0)}(x,i)-H(x,i,1-i))^{+}}{\psi_i(x)}, \; i \in \{0,1\}.
\end{equation}
By Lemma~\ref{lem:2.3} $v$ and $q^{(0)}$ satisfy a linear growth condition. We assumed that $H$ also satisfies a linear growth condition. Therefore the assumption on $\psi_i$ guarantees that $l^{i}_{d}=0$, for $i \in \{0,1\}$. But then from Proposition 5.7 of Dayanik and Karatzas \cite{DK2003} the result follows.

For Case 2, we will also need to show that
\begin{equation}\label{eq:l-c-zero}
l^{i}_{c} \triangleq  \lim_{x \rightarrow c}\frac{(v(x,1-i)-q^{(0)}(x,i)-H(x,i,1-i))^{+}}{\varphi_i(x)}=0,
\end{equation}
and use Proposition 5.13 of Dayanik and Karatzas \cite{DK2003}. But the result is immediate since $v$, $q^{(0)}$ and $H$ are bounded in a neighborhood of $c=0$ and $\lim_{x \rightarrow c}\varphi_i(x)=\infty$, since $c$ is a natural boundary.

\end{proof}

\begin{remark}\label{rem:l-c-l-d}
If both $c$ and $d$ are absorbing it follows from Proposition 4.4 of Dayanik and Karatzas \cite{DK2003} that the stopping times in (\ref{eq:opt-st-time}) are optimal, since $H$, $q^{(0)}$ and $v$ are continuous. Also, observe that when $c$ is absorbing (\ref{eq:l-c-zero}) still holds since $v(c,i)=0$, $i \in\{0,1\}$. Similarly, when $d$ is absorbing $l^{i}_d$ in (\ref{eq:l-d}) is equal to zero.
\end{remark}

\begin{remark}\label{rem:emtyset}
Since $H(\cdot,i,1-i)+H(\cdot,1-i,i)$ is strictly positive, it can easily seen from the definition that $\mathbf{\Gamma^{0}} \cap \mathbf{\Gamma^{1}}=\emptyset$.
\end{remark}

\subsection{ Explicit Solutions} In this section, we let $c=0$ and $d=\infty$, and assume that $c$ is either natural or absorbing, and that $d$ is natural.

\begin{proposition}\label{prop:degeneracy}
Let us introduce the functions
\begin{equation}\label{eq:ass-h}
h_0(x)\triangleq q^{(0)}(x,1)-q^{(0)}(x,0)-H(x,0,1) \quad \text{and} \quad h_1(x)\triangleq q^{(0)}(x,0)-q^{(0)}(x,1)-H(x,1,0).
\end{equation}
\begin{itemize}
\item[(i)] If for all $x \in (0,\infty)$ we have that $h_0(x) \leq 0$ and $h_1(x) \leq 0$ , then $\mathbf{\Gamma^0}=\mathbf{\Gamma^1}=\emptyset$.
\item[(ii)] Let us assume that the dynamics of \eqref{eq:sde} do not depend on $I(t)$ (as a result $X^{(0)}=X$ and we will denote $\mathbb{E}^{x,1}=\mathbb{E}^{x,0}$ by $\mathbb{E}^x$).
Then $h_1(x) \leq 0$ for all $x \in (0,\infty)$ implies that $\mathbf{\Gamma^1}=\emptyset$. Similarly, if $h_0(x) \leq 0$ for all $x \in (0,\infty)$, then $\mathbf{\Gamma^0}=\emptyset$. (Observe that, in this case, the optimal switching problem reduces to an ordinary optimal stopping problem.)
\end{itemize}
\end{proposition}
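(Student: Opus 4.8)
The plan is to reduce everything to the two coupled optimal stopping problems of Proposition~\ref{prop:representation}. Writing $u_i(x)\triangleq v(x,i)-q^{(0)}(x,i)\geq 0$ and using the Dynkin-type identity $q^{(0)}(x,i)=\ME[\int_0^\tau e^{-\alpha s}f(X^{(0)}_s,i)ds+\D q^{(0)}(X^{(0)}_\tau,i)]$, equation~\eqref{eq:v-opt-st} rearranges to
\begin{equation*}
u_i(x)=\sup_{\tau\in S_0^1}\ME\bracks{\D\parens{h_i(X^{(0)}_\tau)+u_{1-i}(X^{(0)}_\tau)}},
\end{equation*}
so $u_i$ is the value of an optimal stopping problem with reward $h_i+u_{1-i}$. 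Unwinding the definition of $\mathbf{\Gamma^{i}}$ shows $x\in\mathbf{\Gamma^{i}}$ iff $u_i(x)=h_i(x)+u_{1-i}(x)$, i.e.\ $x$ lies in the stopping region of this problem. I would prove both parts by deciding when this stopping region is empty.

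For part (i) I would use the sequential approximation of Section~\ref{subsec:recursive}. Subtracting $q^{(0)}$ from~\eqref{eq:w-function} and setting $u^{(n)}_i\triangleq w^{(n)}(\cdot,i)-q^{(0)}(\cdot,i)$ gives the recursion $u^{(n)}_i(x)=\sup_{\tau\in S_0^1}\ME[\D(h_i(X^{(0)}_\tau)+u^{(n-1)}_{1-i}(X^{(0)}_\tau))]$ with $u^{(0)}_i\equiv 0$. By induction $u^{(n)}_i\equiv 0$: if $u^{(n-1)}_{1-i}\equiv 0$ the reward is $h_i\leq 0$, so each expectation is $\leq 0$, while letting $\tau\uparrow\tau_{c,d}$ and invoking the linear growth of $h_i$ with the discounting (exactly as in~\eqref{eq:f-eps}) pushes the expectation to $0$; hence the supremum equals $0$. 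Passing to the limit via Corollary~\ref{lem:bef-main-result-2} yields $u_i\equiv 0$, i.e.\ $v=q^{(0)}$, whence $\mathbf{\Gamma^{i}}=\{x:h_i(x)=0\}$. Since $h_0+h_1=-\parens{H(\cdot,0,1)+H(\cdot,1,0)}<0$ by~\eqref{eq:asspH} these sets are disjoint, and under the sign hypothesis no point makes switching strictly optimal, so $\mathbf{\Gamma^0}=\mathbf{\Gamma^1}=\emptyset$.

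For part (ii) the extra structure is that the dynamics are regime-independent, so there is one scale $F=F_0=F_1$, one pair $\psi=\psi_0=\psi_1,\ \varphi=\varphi_0=\varphi_1$, and one operator $\E^x$. Following Dayanik and Karatzas \cite{DK2003}, after dividing by $\varphi$ and composing with $F^{-1}$ the functions $\widehat{u}_i\triangleq u_i/\varphi$ are the smallest concave majorants, in $y=F(x)$, of the transformed rewards $\widehat{R}_i\triangleq(h_i+u_{1-i})/\varphi$, and $\mathbf{\Gamma^{i}}$ is the contact set $\{\widehat{u}_i=\widehat{R}_i\}$. Because $\widehat{u}_0$ is concave and $\widehat{R}_1=\widehat{u}_0+h_1/\varphi\leq\widehat{u}_0$ (as $h_1\leq 0$), the concave function $\widehat{u}_0$ already majorizes $\widehat{R}_1$, giving $\widehat{u}_1\leq\widehat{u}_0$, i.e.\ $u_1\leq u_0$; probabilistically this is just $u_1=\sup_\tau\E^x[\D(h_1+u_0)(X_\tau)]\leq\sup_\tau\E^x[\D\, u_0(X_\tau)]=u_0$, using that $u_0$ is $\alpha$-excessive.

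The hard part will be upgrading $u_1\leq u_0$ to strict non-contact on $(0,\infty)$, equivalently showing it is never profitable to leave regime $1$, so that $v(\cdot,1)=q^{(0)}(\cdot,1)$ and $\mathbf{\Gamma^{1}}=\emptyset$ (the argument for $\mathbf{\Gamma^{0}}$ being symmetric). I would argue this by excision: a round trip out of regime $1$ over $[\sigma_1,\sigma_2)$ and back changes the payoff by $\E^x[e^{-\alpha\sigma_1}h_1(X_{\sigma_1})+e^{-\alpha\sigma_2}h_0(X_{\sigma_2})]$, and one must show this is nonpositive. The term $e^{-\alpha\sigma_1}h_1(X_{\sigma_1})$ is controlled by $h_1\leq 0$, but $h_0$ at the return time is not signed, so the estimate must be closed using the strictly positive round-trip cost $H(\cdot,0,1)+H(\cdot,1,0)>0$ (equivalently $h_0=-\,(H(\cdot,0,1)+H(\cdot,1,0))-h_1$) together with the excessivity of $u_0$ exploited above. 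This coupling — guaranteeing that temporary excursions into regime $0$ can never be exploited — is the one genuinely delicate step; the rest is bookkeeping with the representation and the concave-majorant characterization.
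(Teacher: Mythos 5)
Your part (i) is essentially the paper's argument: the same subtraction $u^{(n)}_i=w^{(n)}(\cdot,i)-q^{(0)}(\cdot,i)$, the same recursion, the same induction giving $u^{(n)}_i\equiv 0$, and the same passage to the limit via Lemma~\ref{eq:main-result-1} and Corollary~\ref{lem:bef-main-result-2}. (The paper obtains ``supremum $=0$'' from the monotonicity $q^{(n)}\geq q^{(0)}$ rather than by sending $\tau\uparrow\tau_{c,d}$; your variant needs a little care when a boundary is absorbing, but the conclusion is the same.)

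Part (ii) has a genuine gap. You correctly reduce the problem to showing that a temporary excursion into regime $0$, whose net effect on the payoff is $\mathbb{E}^{x}[e^{-\alpha\sigma_1}h_1(X_{\sigma_1})+e^{-\alpha\sigma_2}h_0(X_{\sigma_2})]$, can never be profitable, and you correctly observe that $h_1\leq 0$ controls only the first term while $h_0(X_{\sigma_2})$ is unsigned. But you then stop: the statement that this quantity is nonpositive is announced as ``the one genuinely delicate step'' rather than proved, and it is precisely the content of the proposition --- it does not follow from $u_1\leq u_0$ together with excessivity of $u_0$, and a single-round-trip excision would in any case have to be iterated over the arbitrarily many excursions an admissible strategy may make. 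The paper closes exactly this step by strengthening the induction hypothesis on the finite-switch approximations: it carries along not only $u^{(m)}(\cdot,1)\equiv 0$ but also the explicit identity $u^{(m)}(x,0)=\sup_{\tau}\mathbb{E}^{x}[e^{-\alpha\tau}h_0(X_\tau)]$. From $h_0\leq -h_1$ this yields the quantitative bound \eqref{eq:cruc-ineq}, namely $u^{(m)}(x,0)\leq -\inf_{\tau}\mathbb{E}^{x}[e^{-\alpha\tau}h_1(X_\tau)]$; and because the dynamics are regime-independent, $u^{(m)}(\cdot,0)$ is a nonnegative $F$-concave function for the \emph{same} diffusion that drives the stopping problem defining $u^{(m+1)}(\cdot,1)$, so $\sup_{\tau}\mathbb{E}^{x}[e^{-\alpha\tau}u^{(m)}(X_\tau,0)]=u^{(m)}(x,0)$. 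Combining these two facts is what forces $u^{(m+1)}(\cdot,1)\leq 0$ and propagates the induction. The missing ingredient in your sketch is thus the explicit a priori identification of $u_0$ (or $u^{(m)}(\cdot,0)$) with the value of the single stopping problem for reward $h_0$ --- its excessivity alone, which is all you invoke, only gives $u_1\leq u_0$ and cannot rule out contact.
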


\begin{proof}
\noindent \textbf{(i)\,}
For any $n \geq 1$, let us introduce
\begin{equation}\label{eq:defn-uold}
u^{(n)}(x,i) \triangleq w^{(n)}(x,i)-q^{(0)}(x,i), \quad x \in (0,\infty), \; i\in \{0,1\}.
\end{equation}
Using the strong Markov property of $X^{(0)}$ and \eqref{eq:w-function} we can write
\begin{equation}\label{eq:un}
u^{(n)}(x,i)=\sup_{\tau \in \S_{0}^1}\ME \left[e^{-\alpha \tau}\left(u^{(n-1)}(X^{(0)}_{\tau},1-i)+h_i(X^{(0)}_{\tau})\right)\right].
\end{equation}
Since $u^{(0)}(x,i)=0$ and $h_i(x) \leq 0$, it follows from \eqref{eq:un} that
\begin{equation*}
u^{(1)}(x,i)=0,  \quad x \in (0,\infty), \; i\in \{0,1\}.
\end{equation*}
If we assume that for $m \in \{1, \cdots, n-1\}$ we have that $u^{(m)}(x,i)=0$, $x \in (0,\infty)$,  $i\in \{0,1\}$; it follows  from \eqref{eq:un} that $u^{(m+1)}(x,i)=0$, $x \in (0,\infty)$,  $i\in \{0,1\}$. For a given $n$, we can carry out this induction argument to show that
\begin{equation*}
u^{(n)}(x,i)=0,  \quad x \in (0,\infty), \; i\in \{0,1\}.
\end{equation*}
Now using Lemma~\ref{eq:main-result-1} and Corollary~\ref{lem:bef-main-result-2} we have that
$v(x,i)=q^{(0)}(x,i)$, which yields the desired result.

\noindent \textbf{(ii)\,} We will only prove the first statement since the proof of the second statement is similar. As in the proof of (i) $h_1(x) \leq 0$ for all $x \in (0,\infty)$ implies that $u^{(1)}(\cdot,1)=0$. On the other hand,
\[
u^{(1)}(x,0)=\sup_{\tau \in \S_{0}^1} \mathbb{E}^{x}\left[e^{-\alpha \tau}h_0(X_{\tau})\right].
\]
Let us assume that for $m \in \{1, \cdots, n-1\}$
\[
u^{(m)}(x,1)=0, \quad \text{and that} \quad u^{(m)}(x,0)=\sup_{\tau \in \S_{0}^1} \mathbb{E}^{x}\left[e^{-\alpha \tau}h_0(X_{\tau})\right].
\]
Since $H(x,0,1)+H(x,1,0)>0$ we have that $h_0(x)+h_1(x)<0$, which in
turn implies that
\begin{equation}\label{eq:cruc-ineq}
u^{(m)}(x,0) \leq -\inf_{\tau \in \S_{0}^1} \mathbb{E}^{x}\left[e^{-\alpha \tau}h_1(X_{\tau})\right].
\end{equation}
Using \eqref{eq:cruc-ineq} we can write
\begin{equation}\label{eq:umineqaf}
\begin{split}
u^{(m+1)}(x,1)&=\sup_{\tau \in \S_{0}^1} \mathbb{E}^{x}\left[e^{-\alpha \tau}\left(u^{(m)}(X_{\tau},0)+h_1(X_{\tau})\right)\right]\\
&\leq \sup_{\tau \in \S_{0}^1} \mathbb{E}^{x}\left[e^{-\alpha \tau}u^{(m)}(X_{\tau},0) \right]+ \inf_{\tau \in \S_{0}^1} \mathbb{E}^{x}\left[e^{-\alpha \tau}h_1(X_{\tau})\right] \\
& = u^{(m)}(x,0)+\inf_{\tau \in \S_{0}^1} \mathbb{E}^{x}\left[e^{-\alpha \tau}h_1(X_{\tau})\right] \leq 0.
\end{split}
\end{equation}
The second equality in the above equation follows from the
assumption that the dynamics of \eqref{eq:sde} do not depend on
$I(t)$: Indeed, since the function $u^{(m)}$ is a value function of
an optimal stopping problem, then it is positive and $F$ concave
(see e.g. Proposition 5.11 of Dayanik and Karatzas \cite{DK2003}).
On the other hand,  by the same proposition of  Dayanik and Karatzas
\cite{DK2003} we note that $\sup_{\tau \in \S_{0}^1}
\mathbb{E}^{x}\left[e^{-\alpha \tau}u^{(m)}(X_{\tau},1-i) \right]$
is the smallest non-negative $F$ concave majorant of the function
$u^{(m)}$, which is non-negative and $F$-concave, it follows that
\[
\sup_{\tau \in \S_{0}^1} \mathbb{E}^{x}\left[e^{-\alpha \tau}u^{(m)}(X_{\tau},0) \right]=u^{(m)}(x,0).
\]
 Since the function $u^{(m+1)}$ is non-negative, it follows from \eqref{eq:umineqaf} that $u^{(m+1)} \equiv 0$. On the other hand,
 \[
 u^{(m+1)}(x,0)=\sup_{\tau \in \S_0^1} \mathbb{E}^{x}\left[e^{-\alpha \tau}\left(u^{(m)}(X_{\tau},1)+h_{0}(X_{\tau}) \right)\right]=\sup_{\tau \in \S_0^1} \mathbb{E}^{x}\left[e^{-\alpha \tau}h_{0}(X_{\tau}) \right]
 \]
  thanks to our induction hypothesis. Using this induction on $m$, we see that
 \[
u^{(n)}(x,1)=0, \quad \text{and that} \quad u^{(n)}(x,0)=\sup_{\tau \in \S_{0}^1} \mathbb{E}^{x}\left[e^{-\alpha \tau}h_0(X_{\tau})\right],
\]
for any given $n \in \mathbb{N}_+$. At this point applying Lemma~\ref{eq:main-result-1} and Corollary~\ref{lem:bef-main-result-2} we obtain that
\[
v(x,1)=q^{(0)}(x,1), \quad v(x,0)=\sup_{\tau \in \S_{0}^1}
\mathbb{E}^{x}\left[e^{-\alpha \tau}h_0(X_{\tau})\right]+q^{(0)}(x,
0).
\]
\end{proof}

In the rest of this section we will assume that the dynamics of \eqref{eq:sde} do not depend on $I(t)$ .
We will denote $F_i$ by $F$ and $G_i$ by $G$. We will also assume that $x \rightarrow
H(x,i,1-i)$, is continuously differentiable for $i \in \{0,1\}$.
\begin{proposition}
Let us define
\begin{equation}\label{eq:K0}
K_0(y) \triangleq \frac{h_0(F^{-1}(y))}{\varphi(F^{-1}(y))}, \quad y \in (0,\infty),
\end{equation}
and
\begin{equation}\label{eq:K1}
K_1(y) \triangleq \frac{h_1(G^{-1}(y))}{\psi(G^{-1}(y))}, \quad y \in (-\infty,0],.
\end{equation}
 Here $F^{-1}$
and $G^{-1}$ are functional inverses of $F$ and $G$,
respectively.
If $y \rightarrow K_0(y)$ is non-negative concave on $(0,\infty)$ then $\mathbf{\Gamma}^{0}=(0,\infty)$ and $\mathbf{\Gamma}^{1}=\emptyset$. Similarly, if  $y \rightarrow K_1(y)$ is non-negative concave on $(0,\infty)$ then $\mathbf{\Gamma}^{1}=(0,\infty)$ and $\mathbf{\Gamma}^{0}=\emptyset$.

\end{proposition}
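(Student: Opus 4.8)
The plan is to reduce the assertion to a single ordinary optimal stopping problem and then to read off its stopping region from the geometry of the least concave majorant in the transformed scale, exactly in the spirit of Dayanik and Karatzas \cite{DK2003}. Throughout I work with the difference $u(x,i)\triangleq v(x,i)-q^{(0)}(x,i)$, which by Proposition~\ref{prop:representation} and the strong Markov property satisfies $u(x,i)=\sup_{\tau\in\S_0^1}\mathbb{E}^{x}[e^{-\alpha\tau}(u(X_\tau,1-i)+h_i(X_\tau))]$. I also record the two facts I shall lean on: $\varphi,\psi>0$ on $(0,\infty)$, and $h_0+h_1=-(H(\cdot,0,1)+H(\cdot,1,0))<0$ by \eqref{eq:asspH}.

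Consider first the hypothesis that $K_0$ is non-negative and concave on $(0,\infty)$. Since $\varphi>0$, non-negativity of $K_0$ is equivalent to $h_0(x)\geq 0$ for all $x\in(0,\infty)$. Combined with $h_0+h_1<0$ this forces $h_1(x)<0$ for every $x$, so the hypothesis $h_1\leq 0$ of Proposition~\ref{prop:degeneracy}(ii) is met. That proposition then yields $\mathbf{\Gamma}^1=\emptyset$, together with $v(x,1)=q^{(0)}(x,1)$ (equivalently $u(\cdot,1)\equiv 0$) and $u(x,0)=\sup_{\tau\in\S_0^1}\mathbb{E}^{x}[e^{-\alpha\tau}h_0(X_\tau)]$; in other words the switching problem has collapsed to an ordinary optimal stopping problem with reward $h_0$.

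Next I would invoke the excessive/concavity characterization (Proposition 5.11 of Dayanik and Karatzas \cite{DK2003}): in the scale $y=F(x)$ the normalized value $u(F^{-1}(y),0)/\varphi(F^{-1}(y))$ equals the smallest non-negative concave majorant $\widehat{K_0}$ of $K_0$ on $(0,\infty)$. By hypothesis $K_0$ is already non-negative and concave, hence it is its own least non-negative concave majorant, so $\widehat{K_0}=K_0$, and undoing the transformation gives $u(x,0)=\varphi(x)\,K_0(F(x))=h_0(x)$ for every $x$. To finish I identify the switching region: from $v(x,0)=q^{(0)}(x,0)+u(x,0)$, $v(x,1)=q^{(0)}(x,1)$ and the definition of $h_0$, the equality $u(x,0)=h_0(x)$ is precisely $v(x,0)=v(x,1)-H(x,0,1)$, so $\mathbf{\Gamma}^0=(0,\infty)$. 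The statement for $K_1$ follows by the symmetric argument with the two regimes interchanged, now working in the scale $y=G(x)$ with the $\psi$-normalization; here the natural domain is the range $(-\infty,0)$ of $G$ rather than $(0,\infty)$ as literally written.

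The step I expect to be the main obstacle is the clean invocation of the Dayanik--Karatzas characterization up to the boundaries: one must verify that the least-non-negative-concave-majorant representation is valid on the entire transformed interval, which amounts to controlling the behaviour of $K_0$ (equivalently of $h_0/\varphi$) as $x\downarrow 0$ and $x\uparrow\infty$. This is exactly where the standing linear-growth bounds of Lemma~\ref{lem:2.3}, the growth assumption on $H$, and the boundary classification of $c=0$ and $d=\infty$ re-enter, through the same limiting quantities $l^i_c$ and $l^i_d$ that were shown to vanish in the proof of Proposition~\ref{prop:hitting-times}. Once these boundary limits are in place the identity $\widehat{K_0}=K_0$ is immediate and the remainder of the argument is bookkeeping.
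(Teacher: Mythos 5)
Your argument is correct, and it reaches the conclusion by a mildly different route than the paper. The paper's own proof is a one\-liner that leans on Lemma~\ref{lem:nlemm}: it observes that $P_0$ in \eqref{eq:P0} is the sum of $u(F^{-1}(y),1)/\varphi(F^{-1}(y))$ (non-negative and concave because $u(\cdot,1)$ is the value of an optimal stopping problem, hence $\alpha$-excessive) and $K_0$ (non-negative concave by hypothesis), so its smallest non-negative concave majorant $V_0$ in \eqref{eq:V0} coincides with $P_0$, which unravels to $v(x,0)=v(x,1)-H(x,0,1)$ everywhere; $\mathbf{\Gamma}^1=\emptyset$ then comes from Remark~\ref{rem:emtyset}. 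You instead first decouple: from $K_0\geq 0$ you get $h_0\geq 0$, hence $h_1<0$ by \eqref{eq:asspH}, and Proposition~\ref{prop:degeneracy}(ii) gives $u(\cdot,1)\equiv 0$ and reduces $u(\cdot,0)$ to a single stopping problem with reward $h_0$, to which you then apply the same least-concave-majorant characterization. The two arguments are essentially the same Dayanik--Karatzas punchline applied to $P_0$ versus to $K_0=P_0$ after the decoupling; what your version buys is that it avoids the forward reference (the paper's proof cites \eqref{eq:P0} and \eqref{eq:V0}, which are only introduced in Lemma~\ref{lem:nlemm} \emph{after} this proposition) and makes explicit that the hypothesis forces degeneracy of regime $1$, at the cost of importing the induction behind Proposition~\ref{prop:degeneracy}(ii). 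What the paper's version buys is brevity and the fact that it never needs to know $u(\cdot,1)\equiv 0$. Two small points in your favor: you correctly note that the domain of $K_1$ in the second assertion should be $(-\infty,0]$, the range of $G$, rather than $(0,\infty)$ as literally written; and your closing caveat about validating the majorant characterization up to the boundaries is exactly the content the paper disposes of via the limits $l^i_c$ and $l^i_d$ in Proposition~\ref{prop:hitting-times} and Lemma~\ref{lem:nlemm}(ii), so it is not an additional gap.
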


\begin{proof}
We will only prove the first statement. In this case the function
$P_0$ in \eqref{eq:P0} is non-negative concave and its non-negative
concave majorant in \eqref{eq:V0} satisfies $V_0=P_0$, which implies
that $v(x,0)=v(x,1)-H(x,0,1)$ for all $x$. Therefore
$\mathbf{\Gamma}^{0}=(0,\infty)$. Thanks to Remark~\ref{rem:emtyset}
we necessarily have that $\mathbf{\Gamma}^{1}=\emptyset$.
\end{proof}

\begin{lemma}\label{lem:nlemm}
Let us define
\begin{equation}\label{eq:defn-u}
u(x,i) \triangleq v(x,i)-q^{(0)}(x,i), \quad x \in (0,\infty), \; i\in \{0,1\},
\end{equation}
and
\begin{equation}\label{eq:V0}
V_0(y) \triangleq \frac{u(F^{-1}(y),0)}{\varphi(F^{-1}(y))}, \quad y \in (0,\infty),
\end{equation}
\begin{equation}\label{eq:V1}
V_1(y) \triangleq \frac{u(G^{-1}(y),1)}{\psi(G^{-1}(y))}, \quad y \in (-\infty,0].
\end{equation}
Then the following statements hold:
\begin{itemize}
\item[(i)] $y \to V_0(y)$ and $y \to V_1(y)$ are the smallest positive concave majorants of
\begin{equation}\label{eq:P0}
P_0(y)\triangleq \frac{u(F^{-1}(y),1)}{\varphi(F^{-1}(y))}+K_0(y), \quad y \in (0,\infty),
\end{equation}
and
\begin{equation}
P_1(y)\triangleq \frac{u(G^{-1}(y),0)}{\psi(G^{-1}(y))}+K_1(y), \quad y \in (-\infty,0],
\end{equation}
respectively.
\item[(ii)] $V_0(0)=V_1(0)=0$.
\item[(iii)] $V_0$ is piecewise linear on $\{y \in \R_+: V_0(y)>P_0(y)\}$ and
\begin{equation}
\mathbf{C}_0:=\{y \in \R_+: V_0(y)=P_0(y)\} \subset \mathbf{K}_0:=\left\{y \in \R_+: \frac{\partial^2 K_0(y)}{\partial y^2}<0\right\}.
\end{equation}
Moreover, the function $P_0$ is concave on $\mathbf{K}_0$.
\item[(iv)]$V_1$ is piecewise linear on $\{y \in \R_{-}: V_1(y)>P_1(y)\}$ and
\begin{equation}
\mathbf{C}_1:=\{y \in \R_{-}: V_1(y)=P_1(y)\} \subset \mathbf{K}_1:=\left\{y \in \R_{-}: \frac{\partial^2 K_1(y)}{\partial y^2}<0\right\}.
\end{equation}
The function $P_1$ is concave on $\mathbf{K}_1$.
\end{itemize}
\end{lemma}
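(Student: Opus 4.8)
\emph{Plan of proof.} The whole lemma is an application of the least-concave-majorant theory of Dayanik and Karatzas \cite{DK2003} to a \emph{decoupled reading} of the coupled system, so the first task is to put each component of $u$ into the form of an ordinary discounted optimal stopping problem. Starting from the representation \eqref{eq:v-opt-st} of Proposition~\ref{prop:representation} (in which, since the dynamics do not depend on $I(t)$, $X^{(0)}=X$ and $\E^{x,i}=\E^{x}$), I would substitute $v=q^{(0)}+u$ and the definitions \eqref{eq:ass-h} of $h_0,h_1$ into the integrand. The bracketed reward $-q^{(0)}(X_\tau,i)+v(X_\tau,1-i)-H(X_\tau,i,1-i)$ collapses to $u(X_\tau,1-i)+h_i(X_\tau)$, so that for $i\in\{0,1\}$
\[
u(x,i)=\sup_{\tau\in\S_0^1}\E^{x}\!\left[e^{-\alpha\tau}\bigl(u(X_\tau,1-i)+h_i(X_\tau)\bigr)\right].
\]
Thus each $u(\cdot,i)$ is the value function of a standard optimal stopping problem whose reward $u(\cdot,1-i)+h_i$ is continuous and of linear growth (by Lemma~\ref{lem:2.3} and the growth assumptions on $H$).

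For parts (i) and (ii) I would invoke the Dayanik--Karatzas reduction (their Propositions~5.11--5.13): dividing the value by $\varphi$ and viewing it as a function of $F$ exhibits $V_0$ as the smallest nonnegative concave majorant of the reward divided by $\varphi$ and read in the $F$-scale, namely $\frac{u(F^{-1}(y),1)}{\varphi(F^{-1}(y))}+K_0(y)=P_0(y)$; the symmetric statement with $\psi$ and $G$ gives $V_1$ as the least concave majorant of $P_1$. The boundary hypotheses needed for this reduction are exactly the vanishing of the limits $l^i_c,l^i_d$, which follow from the linear growth of $v,q^{(0)},H$ together with the growth of $\psi,\varphi$ at the natural boundaries, precisely as in Proposition~\ref{prop:hitting-times}. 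Part (ii) is then a boundary computation: $V_0(0)=\lim_{x\downarrow0}u(x,0)/\varphi(x)=0$ because $u(\cdot,0)$ is bounded near $0$ while $\varphi\to\infty$ when $c$ is natural, and $u(0,0)=0$ when $c$ is absorbing; and $V_1(0)=\lim_{x\to\infty}u(x,1)/\psi(x)=0$ from the linear growth of $u(\cdot,1)$ and the assumption $x/\psi(x)\to0$.

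The substance is in (iii)--(iv). The key structural observation is that the coupling term $A_0(y):=\frac{u(F^{-1}(y),1)}{\varphi(F^{-1}(y))}$ is itself concave, since $u(\cdot,1)$, being the value of an optimal stopping problem, is $\alpha$-excessive and hence $F$-concave; and, crucially, $u(\cdot,1)$ is $\alpha$-harmonic on the regime-$1$ continuation region $(\mathbf{\Gamma}^1)^c$, so that $A_0$ is \emph{affine} there. Next I would identify the contact set: $V_0(y)=P_0(y)$ is equivalent to $v(x,0)=v(x,1)-H(x,0,1)$, so $\mathbf{C}_0=F(\mathbf{\Gamma}^0)$; by Remark~\ref{rem:emtyset}, $\mathbf{\Gamma}^0\cap\mathbf{\Gamma}^1=\emptyset$, whence $\mathbf{C}_0\subset F((\mathbf{\Gamma}^1)^c)$, an open set on which $A_0$ is affine and therefore $P_0''=K_0''$. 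Piecewise linearity of $V_0$ on $\{V_0>P_0\}$ is the general affineness property of a least concave majorant on the open set where it strictly dominates its obstacle (equivalently, $u(\cdot,0)$ is $\alpha$-harmonic on the regime-$0$ continuation region). Since $V_0$ is concave and touches $P_0$ from above on $\mathbf{C}_0$, one gets $P_0''\le0$ there, hence $K_0''\le0$ on $\mathbf{C}_0$; and because $A_0$ is concave everywhere, $P_0''=A_0''+K_0''<0$ on $\mathbf{K}_0=\{K_0''<0\}$, which is exactly the asserted concavity of $P_0$ on $\mathbf{K}_0$. Part (iv) follows verbatim with the roles of $(\psi,G)$ and of the two regimes interchanged.

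The main obstacle, in my estimation, is twofold. First, one must be careful that the decoupling genuinely yields $\alpha$-excessive, and off the switching region $\alpha$-harmonic, components $u(\cdot,1-i)$; this rests on the coupled fixed-point identity above together with the transform-independent equivalence between $\alpha$-excessivity and $F$-concavity in \cite{DK2003}, so that the same function $u(\cdot,1)$ may legitimately be read both through $(\psi,G)$ (for $V_1$) and through $(\varphi,F)$ (for $A_0$). Second, and more delicate, is upgrading the inclusion $\mathbf{C}_0\subset\{K_0''\le0\}$ to the strict $\mathbf{C}_0\subset\{K_0''<0\}$: this requires excluding contact at points where $P_0$ is locally affine (where the majorant merely coincides with an affine obstacle and the point lies on the relative boundary of the stopping region), and it is here that the implicit regularity of $K_0=(h_0\circ F^{-1})/(\varphi\circ F^{-1})$—guaranteed by $H\in C^1$ and the smoothness of $q^{(0)}$—must be used with care.
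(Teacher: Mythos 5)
Your proposal follows essentially the same route as the paper's proof: you decouple the system via the representation $u(x,i)=\sup_{\tau}\E^{x}[e^{-\alpha\tau}(u(X_\tau,1-i)+h_i(X_\tau))]$, invoke the Dynkin/Dayanik--Karatzas least-concave-majorant characterization for (i) and the boundary limits $l^i_c=l^i_d=0$ for (ii), and for (iii)--(iv) derive the concavity of $P_0$ on $\mathbf{K}_0$ from the $F$-concavity of $y\mapsto u(F^{-1}(y),1)/\varphi(F^{-1}(y))$ (read through the transform-independence of $\alpha$-excessivity, which is exactly what the paper means by ``this can be shown using item (i)''), with the remaining claims following from standard properties of smallest concave majorants. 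Your treatment is in fact more detailed than the paper's, and the subtlety you flag about upgrading $K_0''\le 0$ to the strict inequality on $\mathbf{C}_0$ is a genuine point that the paper's own proof passes over in silence.
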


\begin{proof}
\textbf{(i).} It follows from (\ref{eq:v-opt-st}) that $u(\cdot,i)$ satisfies
\begin{equation}
u(x,i)=\sup_{\tau \in \S_{0}^1}\mathbb{E}^{x} \left[e^{-\alpha \tau}\left(u(X^{(0)}_{\tau},1-i)+q^{(0)}(X^{(0)}_{\tau},1-i)-q^{(0)}(X^{(0)}_{\tau},i)-H(X^{(0)}_{\tau},i,1-i)\right)\right].
\end{equation}
The statement follows from Theorem 16.4 of Dynkin \cite{Dynkin} (also see Proposition 5.11 of Dayanik and Karatzas \cite{DK2003}).

\noindent \textbf{(ii).} The result follows from (\ref{eq:l-c-zero})
and (\ref{eq:l-d}). We use Remark~\ref{rem:l-c-l-d} when $0$ is
absorbing.

\noindent\textbf{(iii), (iv).}  First, we want to
show that $P_0$ is concave on $\mathbf{K}_0$. It is enough to show
that
\[
 \frac{u(F^{-1}(y),1)}{\varphi(F^{-1}(y))}
\]
is concave on $\mathbf{K}_0$. But this can be shown using item (i).
Now the rest of the statement follows since $V_0$ is the smallest
concave majorant of $P_0$. Proof of (iv) follows similarly.
\end{proof}

In the next proposition we will give sufficient conditions under which the switching regions are connected and provide explicit solutions for the value function of the switching problem. We will also show that the value functions of the switching problem $x \rightarrow v(x,i)$, $i \in \{0,1\}$ are continuously differentiable under our assumptions.

In what follows we will assume that $H(x,i,1-i) \equiv H(i,1-i)$, $i \in \{0,1\}$.

\begin{proposition}\label{prop:smooth-fit}
Let us assume that the function $h_0$ and $h_1$ defined in Proposition~\ref{prop:degeneracy}
satisfy
\begin{equation}\label{eq:ass-1}
\lim_{x \to \infty}h_0(x)>0, \quad  \sup_{x>0}h_1(x)>0,
\end{equation}
and that  $\mathbf{K}_0=(M_0,\infty)$ for some $M_0>0$.
Then
\begin{equation}\label{eq:v-0-m}
v(x,0)=\begin{cases}
\beta_0 \psi(x)+q^{(0)}(x,0), & x \in (0,a) \\
\beta_1 \varphi(x)+q^{(0)}(x,1)-H(0,1), & x \in [a,\infty),
\end{cases}
\end{equation}
for some positive constants $a$, $\beta_0$ and $\beta_1$.
Moreover the following statements hold:
\begin{itemize}
\item[(i)] If $\mathbf{K}_1=(-\infty, -M_1)$ for some $M_1>0$, then $x \to v(x,1)$ has the following form
\begin{equation}\label{eq:v-1-m}
v(x,1)=
\begin{cases}
\beta_0 \psi (x)+q^{(0)}(x,0)-H(1,0), & x \in (0,b] \\
\beta_1 \varphi(x)+q^{(0)}(x,1), & x \in (b,\infty),
\end{cases}
\end{equation}
for a positive constant $b<a$.

\item[(ii)] If $\mathbf{K}_{1}=(-L,-N)$, for some $L, N>0$, $v(x,1)$ is of the form
\begin{equation} \label{eq:exp-v1}
v(x,1)=
\begin{cases}
\hat{\beta}_1 \psi(x)+q^{(0)}(x,1) & x \in (0,\tilde{b}) \\
\beta_0 \psi(x)+q^{(0)}(x,0)-H(1,0), & x \in [\tilde{b},c] \\
\widetilde{\beta}_1 \varphi(x)+q^{(0)}(x,1), & x \in (c,\infty).
\end{cases}
\end{equation}
for positive constant $c<a$ and $\tilde{b}<c$.
\end{itemize}

In both cases the value functions are continuously differentiable. As a result  the positive $a, b, \tilde{b}, c, \beta_0, \beta_1$, $\hat{\beta}_1$ and $\widetilde{\beta}_1$ can be determined from the continuous and the smooth fit conditions.
\end{proposition}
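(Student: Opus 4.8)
The plan is to carry everything over to the transformed $y$-coordinates of Lemma~\ref{lem:nlemm}, where the two coupled problems become the problem of finding the smallest positive concave majorants $V_0$ of $P_0$ and $V_1$ of $P_1$. The switching set $\mathbf{\Gamma^0}$ corresponds under $y=F(x)$ to the coincidence set $\mathbf{C}_0=\{V_0=P_0\}$, and $\mathbf{\Gamma^1}$ corresponds under $y=G(x)$ to $\mathbf{C}_1=\{V_1=P_1\}$. On each coincidence (switching) set one has $v(x,0)=v(x,1)-H(0,1)$ (resp. $v(x,1)=v(x,0)-H(1,0)$), while on each continuation set $V_i$ is linear; reading off these two regimes and transforming back will produce the branches of \eqref{eq:v-0-m}, \eqref{eq:v-1-m} and \eqref{eq:exp-v1}.

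First I would fix the shape of $\mathbf{\Gamma^0}$. By Lemma~\ref{lem:nlemm}(iii), $\mathbf{C}_0\subset\mathbf{K}_0=(M_0,\infty)$ and $P_0$ is concave on $\mathbf{K}_0$. Since $\lim_{x\to\infty}h_0(x)>0$ and $\varphi(x)\to0$ as $x\to\infty$ (as $d=\infty$ is natural), while $u(\cdot,1)/\varphi\geq0$, the payoff $P_0(y)$ is positive and, being concave on $(M_0,\infty)$, forces the smallest concave majorant to coincide with $P_0$ near $+\infty$; as $F$ is increasing, large $x$ matches large $y$. Together with $V_0(0)=0$ (Lemma~\ref{lem:nlemm}(ii)) and the piecewise linearity off $\mathbf{C}_0$, this makes $\mathbf{C}_0=[F(a),\infty)$ a single half-line and $V_0$ the line through the origin that is tangent to $P_0$ at $F(a)$, i.e. $V_0(y)=\beta_0 y$ on $(0,F(a))$. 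Undoing the transformation via $F\varphi=\psi$ gives $u(x,0)=\beta_0\psi(x)$, hence the first branch of \eqref{eq:v-0-m} on $(0,a)$.

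On $[a,\infty)=\mathbf{\Gamma^0}$ we have $v(x,0)=v(x,1)-H(0,1)$. By Remark~\ref{rem:emtyset}, $\mathbf{\Gamma^0}\cap\mathbf{\Gamma^1}=\emptyset$, so $[a,\infty)$ lies in the continuation set of the regime-$1$ problem, where $V_1$ is linear; since $G(x)\to0^-$ as $x\to\infty$ with $V_1(0)=0$, that linear piece passes through the origin, giving $u(x,1)=\beta_1\varphi(x)$ and so $v(x,1)=q^{(0)}(x,1)+\beta_1\varphi(x)$, which yields the second branch of \eqref{eq:v-0-m}. The identical analysis of $V_1$, now using Lemma~\ref{lem:nlemm}(iv) ($\mathbf{C}_1\subset\mathbf{K}_1$, $P_1$ concave on $\mathbf{K}_1$) and $\sup_{x>0}h_1(x)>0$ to ensure $\mathbf{C}_1\neq\emptyset$, determines $\mathbf{\Gamma^1}$: when $\mathbf{K}_1=(-\infty,-M_1)$ the coincidence set is a half-line $(-\infty,G(b)]$, giving $\mathbf{\Gamma^1}=(0,b]$ and the two-piece form \eqref{eq:v-1-m}; when $\mathbf{K}_1=(-L,-N)$ it is a bounded interval, giving $\mathbf{\Gamma^1}=[\tilde b,c]$ and the three-piece form \eqref{eq:exp-v1}, where $V_1$ is constant on the left piece (so $u(\cdot,1)=\hat\beta_1\psi$), equal to $P_1$ on $\mathbf{C}_1$, and linear through the origin on the right. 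On the switching branches one substitutes the already-found form of $v(\cdot,0)$ on $(0,a)$, using $b<a$ and $c<a$.

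Finally, continuous differentiability follows from smooth fit: at each endpoint of a coincidence set the linear (or constant) majorant meets the differentiable concave payoff tangentially, so $V_i$ is $C^1$ across that boundary; as $F,G,\psi,\varphi$ are $C^1$ and $H$ is constant, transforming back gives $v(\cdot,i)\in C^1$, and the constants $a,b,\tilde b,c,\beta_0,\beta_1,\hat\beta_1,\widetilde\beta_1$ are then fixed by the continuous- and smooth-fit equations at these contact points. I expect the main obstacle to be the coupling of the two problems: $P_0$ contains $u(\cdot,1)$ and $P_1$ contains $u(\cdot,0)$, so the coincidence sets cannot be located independently. The disjointness $\mathbf{\Gamma^0}\cap\mathbf{\Gamma^1}=\emptyset$ together with the structural hypotheses on $\mathbf{K}_0$ and $\mathbf{K}_1$ is what breaks this circularity and licenses the region-by-region reconstruction; the delicate steps are establishing connectedness of each coincidence set (so that no spurious additional switching pieces arise) and justifying the tangency that delivers the smooth fit.
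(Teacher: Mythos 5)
Your overall route is the paper's: pass to the $F$- and $G$-coordinates of Lemma~\ref{lem:nlemm}, identify the switching regions with the coincidence sets $\mathbf{C}_0,\mathbf{C}_1$ of the smallest nonnegative concave majorants, use $\mathbf{C}_i\subset\mathbf{K}_i$ together with the concavity of $P_i$ on $\mathbf{K}_i$ to get connectedness, invoke Remark~\ref{rem:emtyset} to decouple the two problems, and extract smooth fit from tangency. The genuine gap is in your first structural step: the claim that $P_0$ being ``positive and concave on $(M_0,\infty)$ forces the smallest concave majorant to coincide with $P_0$ near $+\infty$'' is not a valid principle. For example, the smallest nonnegative concave majorant of $P(y)=y-1/y$ (concave and positive for large $y$) subject to $V(0)=0$ is $V(y)=y$, which never touches $P$. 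So positivity and concavity of $P_0$ at infinity do not by themselves yield $\mathbf{C}_0\neq\emptyset$, and without that the two-branch structure \eqref{eq:v-0-m} collapses ($V_0$ would be globally linear and there would be no switching region at all). This is precisely where the hypothesis $\sup_{x>0}h_1(x)>0$ enters the paper's argument: assuming the degenerate configuration, one finds that $h_1\le -(H(0,1)+H(1,0))<0$ everywhere, contradicting \eqref{eq:ass-1}. In your write-up that hypothesis is used only to make $\mathbf{C}_1$ nonempty, so the assumption that actually pins down the nondegeneracy of $\mathbf{C}_0$ goes unused and the step is unsupported.

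A secondary, smaller soft spot: you assert that the linear piece of $V_0$ meets ``the differentiable concave payoff'' tangentially at $F(a)$, but $P_0$ contains the a priori merely continuous function $u(\cdot,1)$, so its differentiability at the contact point has to be proved rather than assumed. You correctly flag this as the delicate point; the resolution is the one the paper gives: by the disjointness of $\mathbf{\Gamma}^0$ and $\mathbf{\Gamma}^1$, the contact point $a$ lies in the (open) continuation region of the regime-$1$ problem, where $u(\cdot,1)$ is a constant multiple of $\varphi$ and hence smooth, so $P_0$ is continuously differentiable in a neighborhood of $F(a)$ and the tangency condition determining $\beta_0$ is legitimate (and symmetrically for $P_1$ at the endpoints of $\mathbf{C}_1$ in cases (i) and (ii)). With these two steps supplied, your plan coincides with the paper's proof.
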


Before, we give the proof we will make two quick remarks.
\begin{remark}\label{rem:hs}
Note that $h_0(x)+h_1(x)<0$ for all $x \in \R_+$. So when $h_0(x)>0$, we have that $h_1(x)<0$.
The assumption that $\lim_{x \to \infty}h_0(x)>0$ ensures that the controller prefers state 0 to state 1, which can also be seen from the form of the value function $v(\cdot,0)$ in \eqref{eq:v-0-m}.
\end{remark}

\begin{remark}\label{rem:use-conv}
The following two identities can be checked to see when the functions $K_i$, $i \in \{0,1\}$ are concave:
\begin{equation}\label{eq:conc-iden1}
\frac{d^2 K_0(y)}{d y^2} \cdot \left(\A-\alpha\right) h_0(x) \geq 0, \quad \text{where} \quad y=F(x),
\end{equation}
and
\begin{equation}\label{eq:conc-iden2}
\frac{d^2 K_1(z)}{d z^2} \cdot \left(\A-\alpha\right) h_1(x) \geq 0,
\quad \text{where} \quad z=G(x),
\end{equation}
where $\A$ is the infinitesimal generator of $X$.

It follows from \eqref{eq:conc-iden1} and \eqref{eq:conc-iden2} that if $\frac{d^2 K_0(y)}{d y^2} \leq 0$, then $\frac{d^2 K_1(z)}{d z^2} \geq 0$. Let us prove this statement. First, due to \eqref{eq:conc-iden1}, $\frac{d^2 K_0(y)}{d y^2} \leq 0$ implies
\[
\left(\A-\alpha\right)(q^{(0)}(x,1)-q^{(0)}(x,0))+\alpha H(0,1)  \leq 0.
\]
As a result
\[
\left(\A-\alpha\right)(-q^{(0)}(x,1)+q^{(0)}(x,0))+\alpha H(1,0) \geq \alpha H(0,1)+\alpha H(1,0)>0,
\]
which implies $\frac{d^2 K_1(z)}{d z^2} \geq 0$ thanks to \eqref{eq:conc-iden2}.
\end{remark}

\vspace{0.1in}

\noindent \textbf{Proof of Proposition~\ref{prop:smooth-fit}.} The
proof is a corollary of Lemma~\ref{lem:nlemm}. Let us denote
$k:=\inf \mathbf{C}_0$. We will argue that $k<\infty$. If we assume
that $k=\infty$, then it follows that $\mathbf{\Gamma}^0=\R_+$ and
hence $v(x,0)=v(x,1)-H(0,1)=q^{(0)}(x,1)-H(0,1)$, where the last
identity follows from Remark~\ref{rem:emtyset}. Since $q^{(0)}(x,0)
\leq v(x,0)$, we obtain that
$h_1(x)=q^{(0)}(x,0)-q^{(0)}(x,1)-H(1,0) \leq - (H(0,1)+H(1,0))<0$
for all $x \in \R_+$. This contradicts our assumption on $h_1$ in
\eqref{eq:ass-1}.

Since $V_0(k)=P_0(k)$ and $y \to P_0(y)$ is concave on $(k, \infty)$ (by Lemma~\ref{lem:nlemm} (iii))
it follows that $\mathbf{C}_0=[k,\infty)$, due to the fact that $V_0$ is the smallest concave majorant of $P_0$. As a result, thanks also to Lemma~\ref{lem:nlemm}
(ii), we have that
\begin{equation}
V_0(y)=\begin{cases}
\alpha \,y & y \in (0,k)
\\ P_0(y) & y \in [k,\infty),
\end{cases}
\end{equation}
for some constants $\alpha >0$ that satisfies
\begin{equation}
\alpha \,k = P_0(k),
\end{equation}
which proves \eqref{eq:v-0-m}.\\

\noindent \textbf{(i)}
Similarly, if we let $l:=\sup \mathbf{C}_1$, then we have that this quantity is a finite negative number and that $\mathbf{C}_1=(-\infty,l)$. As a result,

\begin{equation}
V_1(y)=\begin{cases}
P_1(y) & y \in (-\infty,l],
\\ -\beta \, y & y \in (l,0],
\end{cases}
\end{equation}
for some constant $\beta>0$ that satisfies
\begin{equation}
-\beta \,l=P_1(l).
\end{equation}
 Next, we are going to determine $\alpha$, $\beta$, $k$ and $l$
making use of the fact that $V_0$ and $V_1$ are smallest
non-negative majorants of $P_0$ and $P_1$ further.  First observe
that $y \to K_0(y)$ is continuously differentiable since by
\eqref{eq:R-lr}, $x \rightarrow q^{(0)}(x,i)$ is continuously
differentiable. Second, by using (\ref{eq:defn-u}) and (\ref{eq:V0})
we obtain
\begin{equation}\label{eq:v0-pf}
u(x,0)=\begin{cases}
\alpha \psi(x), & x \in [0,F^{-1}(k)), \\
\beta \varphi(x)+q^{(0)}(x,1)-q^{(0)}(x,0)-H(0,1), & x \in [F^{-1}(k),\infty),
\end{cases}
\end{equation}
and
\begin{equation}\label{eq:v1-pf}
u(x,1)=\begin{cases}
\alpha \psi(x)+q^{(0)}(x,0)-q^{(1)}(x,1)-H(1,0), & x \in [0,G^{-1}(l)], \\
\beta \varphi(x), & x \in (G^{-1}(l),\infty).
\end{cases}
\end{equation}
It follows from Remark~\ref{rem:emtyset} that
\begin{equation}
G^{-1}(l) < F^{-1}(k).
\end{equation}
As a result, we have that
the function $x \rightarrow u(x,1)$ is differentiable on $(F^{-1}(k)-\epsilon,\infty)$, for some $\epsilon>0$.
Along with the differentiability of the function $K_0$, this observation yields that
\begin{equation}\label{eq:diif-V-0}
y \rightarrow P_0(y) \quad \text{is differentiable on} \quad [k-\delta_0,\infty),
\end{equation}
for some $\delta_0>0$.
Similarly, the differentiability of $x \rightarrow u(x,0)$ on $(0,G^{-1}(l))$ implies that
\begin{equation}\label{eq:diif-V-1}
y \rightarrow P_1(y) \quad \text{is differentiable on} \quad (-\infty,l+\delta_1],
\end{equation}
for some $\delta_1>0$.
>From (\ref{eq:diif-V-0}) and (\ref{eq:diif-V-1}) together with the fact that $V_0$ and $V_1$ are the smallest non-negative majorants of $P_0$ and $P_1$, we can determine $\alpha$, $\beta$, $k$ and $l$ from the following additional equations they satisfy
\begin{equation}
\begin{split}
\alpha=\frac{\partial P_0(y)}{\partial y}\bigg|_{y=k}, \quad \beta=-\frac{\partial P_1(y)}{\partial y}\bigg|_{y=l}.
\end{split}
\end{equation}
Using (\ref{eq:defn-u}) we can write the value functions $v(\cdot,i)$, $i \in \{0,1\}$ as
\begin{equation}\label{eq:v0-t-pf}
v(x,0)=\begin{cases}
\alpha \psi(x)+q^{(0)}(x,0), & x \in (0,F^{-1}(k)), \\
\beta \varphi(x)+q^{(0)}(x,1)-H(0,1), & x \in [F^{-1}(k),\infty),
\end{cases}
\end{equation}
and
\begin{equation}\label{eq:v1-t-pf}
v(x,1)=\begin{cases}
\alpha \psi(x)+q^{(0)}(x,0)-H(1,0), & x \in (0,G^{-1}(l)], \\
\beta \varphi(x)+q^{(0)}(x,1), & x \in (G^{-1}(l),\infty).
\end{cases}
\end{equation}
Now, a direct calculation shows that the left derivative and the right derivative of $x \rightarrow v(x,0)$ are equal at $x=F^{-1}(k)$. Similarly, one can show the same holds for the function $x \rightarrow v(x,1)$ at $x=G^{-1}(l)$. This completes the proof of (i). \\

\noindent \textbf{(ii)} Let us denote $s:=\inf \mathbf{C}_1$ and
$t:=\sup \mathbf{C}_1$. The function $P_1$ is concave on the
interval $[s,t]$, by Lemma~\ref{lem:nlemm} (iv).  Moreover, because
$V_1$ is the smallest non-negative majorant of $P_1$ it
follows that $\mathbf{C}_1=[s,t]$. Using the facts that $V_1$ is
piecewise linear on $\{y \in \R_-: V_1(y)>P_1(y)\}$, $V_1(0)=0$ and
$\lim_{y \to -\infty}K_1(y)<0$, the last being
equivalent to $\lim_{x \to 0}h_1(x)<0$ (this follows from our
assumption on $h_0$ in (\ref{eq:ass-1}), the
relation between $h_0$ and $h_1$ pointed out in Remark~\ref{rem:hs},
and our assumption on the set $\mathbf{K}_1$), we can write $V_1$ as
\begin{align*}
  V_1(y)=\begin{cases}
    \gamma_1, & y\in (-\infty, s),\\
    P_1(y), & y\in [s, t],\\
    -\widetilde{\beta}_1 y, &y\in (t, 0],
  \end{cases}
\end{align*}
from which \eqref{eq:exp-v1} follows. Note that $s \neq t$ since the
function $V_1$ is the smallest positive concave majorant of the
function $P_1$. The proof that the \emph{smooth fit property} is
satisfied at the boundaries follows similar line of arguments to the
proof of item (i). \hfill $\square$

\subsection{Examples}

\begin{example}\normalfont
In this example we will show how changing the switching costs we can
move from having one continuation regions (item (i) in
Proposition~\ref{prop:smooth-fit}) to disconnected continuation
regions (item (ii) in Proposition~\ref{prop:smooth-fit}). Let  the
running reward function in \eqref{eq:problem} be given by
$f_i(x)=k_ix^\gamma_i$ for $i=0, 1$ with $0<\gamma_0<\gamma_1<1$ and
$k_0>0, k_1\in \R_+$. We assume that dynamics of the underlying
state variable follow
\begin{equation*}
dX_t=mX_tdt + \beta X_tdW_t
\end{equation*}
where $m$ and $\beta$ are some given constants and
\begin{equation}\label{eq:ex1-condition}
m\gamma_1+\frac{\beta^2\gamma_1^2}{2}<\alpha.
\end{equation}

\textbf{Case 1. A Connected continuation region.} We assume that
$H(1, 0)<0$ and $H(0, 1)>0$. (Recall that $H(1,0)+H(0,1)>0$.)
Observe that our assumptions in Section 2.1, (\ref{eq:assum:dyn}),
(\ref{eq:ass-H}) and (\ref{eq:ass-f}) are readily satisfied.  In
what follows we will check the assumptions in item (i) of
Proposition~\ref{prop:smooth-fit} hold. First, let us obtain
functions, $\psi$, $\varphi$, $F$ , $G$, $q^{(0)}(\cdot,i)$, $i \in
\{0,1\}$, in terms of which we stated our assumptions. The
increasing and decreasing solutions of the ordinary differential
equation $(\A-\alpha)u=0$ are given by $\psi(x)=x^{\mu_+}$ and
$\varphi(x)=x^{\mu_-}$, where \[\mu_{+, -}=\frac{1}{\beta^2}\left(-m
+
\frac{1}{2}\beta^2\pm\sqrt{(m-\frac{1}{2}\beta^2)^2+2\alpha\beta^2}\right).\]
Note that under the assumption $\alpha>m$, we have $\mu_+>1$ and
$\nu_- <0$. Observe that $\lim_{x \rightarrow \infty} x/\psi=0$, $i
\in \{0,1\}$ (the main assumption of
Proposition~\ref{prop:hitting-times}).  It follows that
$F=x^{2\Delta/\beta^2}$ and $G=-x^{-2\Delta/\beta^2}$, in which
\[
\Delta=\sqrt{(m-\frac{1}{2}\beta^2)^2+2\alpha\beta^2}. %\quad
%\Delta_1=\sqrt{(m-\lambda-\frac{1}{2}\beta^2)^2+2\alpha\beta^2}.
\]
We can calculate $q^{(0)}(\cdot,i)$, $i \in \{0,1\}$ explicitly:
\begin{equation*}
q^{(0)}(x,i)=\ME\left[\int_0^\infty e^{-\alpha s}f_i(X^{(0)}_s)
\diff s\right]=\frac{k_ix^{\gamma_i}}{C_i},
\end{equation*}
where
$C_i:=\alpha-(m\gamma_i-\frac{1}{2}\beta^2\gamma_i(1-\gamma_i))>0$
since $0<\gamma_i<1$ and $\alpha>m$.
On the other hand,
\[
h_0(x)=
\frac{k_1x^{\gamma_1}}{C_1}-\frac{k_0x^{\gamma_0}}{C_0}-H(0,1),
\]
and
\[
h_1(x)
=\frac{k_0x^{\gamma_0}}{C_0}-\frac{k_1x^{\gamma_1}}{C_1}-H(1,0).
\]
The limits
\begin{equation}
\lim_{x \rightarrow \infty}h_0(x) =\infty, \quad \lim_{x \rightarrow
0}h_1(x)=-H(1,0)>0.
\end{equation}
Hence \eqref{eq:ass-1} in Proposition~\ref{prop:smooth-fit} is
satisfied.

Let us show that $\mathbf{K}_0=(M_0,\infty)$ and $\mathbf{K}_1=(-\infty, -M_1)$ for some $M_0, M_1>0$.
Remark~\ref{rem:use-conv} will be used to achieve this final goal.
\begin{equation}
\left(\A-\alpha\right)
h_0(x)=\frac{k_1}{C_1}\left(m\gamma_1+\frac{\beta^2\gamma_1^2}{2}
-\alpha\right)x^{\gamma_1}+\alpha H(0,1)-\frac{k_0}{C_0}\left(m\gamma_0+\frac{\beta^2\gamma_0^2}{2}-\alpha\right)x^{\gamma_0},
\end{equation}
which is negative only for large enough $x$ (since we assumed
\eqref{eq:ex1-condition}). On the other hand,
\begin{equation}\label{eq:concave-ex1-K1}
\left(\A-\alpha\right) h_1(x)
=\frac{k_0}{C_0}\left(m\gamma_0+\frac{\beta^2\gamma_0^2}{2}-\alpha\right)x^{\gamma_0}-\frac{k_1}{C_1}\left(m\gamma_1+\frac{\beta^2\gamma_1^2}{2}-\alpha\right)x^{\gamma_1}
+\alpha H(1,0),
\end{equation}
which is negative only for small enough non-negative $x$.

Thanks to Proposition~\ref{prop:smooth-fit}
the value function $v(x,i)$ is given by
\begin{align} \nonumber
\begin{aligned}
   v(x, 0) &= \begin{cases}
                 \beta_0x^{\mu_{+}}+\frac{k_0}{M_0}x^\gamma_0, & x \in [0,a), \\
                 \beta_1
x^{\mu_-}+\frac{k_1}{M_1}x^\gamma_1-H(0,1),
                 &x \in [a,\infty),
   \end{cases};
  \hspace{0.2cm}
    v(x, 1) &=  \begin{cases}
                 \beta_0x^{\mu_{+}}+\frac{k_0}{M_0}x^\gamma_0-H(1,0), & x\in [0,b], \\
                 \beta_1
x^{\mu_-}+\frac{k_1}{M_1}x^\gamma_1,
                 &x \in [b,\infty),\end{cases}
\end{aligned}
\end{align}
in which the positive constants $\beta_0$, $\beta_1$, $a$ and $b$
can be determined from continuous and smooth fit conditions.
Figure~\ref{fig:ex1-connect} illustrates a numerical example. Note
that since the closing cost $C$ is negative, $v(x, 1)\ge v(x, 0)$
for all $x\in \R_+$.
\begin{figure}[h]
\label{fig:ex1-connect}
\begin{center}
\begin{minipage}{1\textwidth}
\centering{\includegraphics[scale=1]{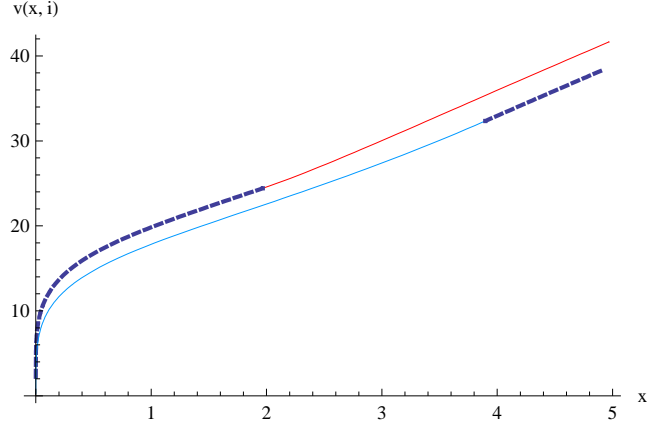}}
\end{minipage}
\caption{\small A numerical example illustrating Case 1.
Here, $(m, \beta, \alpha, L, C)=(0.01, 0.25, 0.1, 3, -2)$
and $(\gamma_0, \gamma_1, k_0, k_1)=(0.25, 0.75, 1.8, 1.2)$. The
unknown variables are determined to be $(a, b, \beta_0, \beta_1)=(3.8954, 1.9678,
0.416971, 11.3264)$. $v(x, 0)$ is plotted in a blue line on $x\in
(0, a]$ and in a dashed line on $x\in (a, \infty)$. $v(x, 1)$ is
plotted in a dashed line on $x\in (0, b]$ and in a red line on $x\in
(b, \infty)$.}
\end{center}
\end{figure}

\textbf{Case 2. Multiple continuation regions.} We will change the
value of switching from 1 to 0 and assume that it is positive, i.e.,
$H(1, 0)>0$ while we keep the assumption that $H(0, 1)>0$. Clearly,
$\sup_{x>0}h_1(x)>0$.  Moreover, the analysis of
(\ref{eq:concave-ex1-K1}) easily shows that $\left(\A-\alpha\right)
h_1(0)=C\alpha>0$ and $\lim_{x\rightarrow
\infty}\left(\A-\alpha\right) h_1(x)=\infty>0$. As a result
$\left(\A-\alpha\right) h_1(x)=0$ has two real roots. This fact
translates into the fact that $\mathbf{K}_1=(-L,-N)$ for some $L,
N>0$.
 See Figure 2.2 for the shape of
$K_1(y)$ for a particular set of parameters.

\begin{figure}[h]

\begin{center}
\begin{minipage}{0.45\textwidth}
\centering{\includegraphics[scale=0.75]{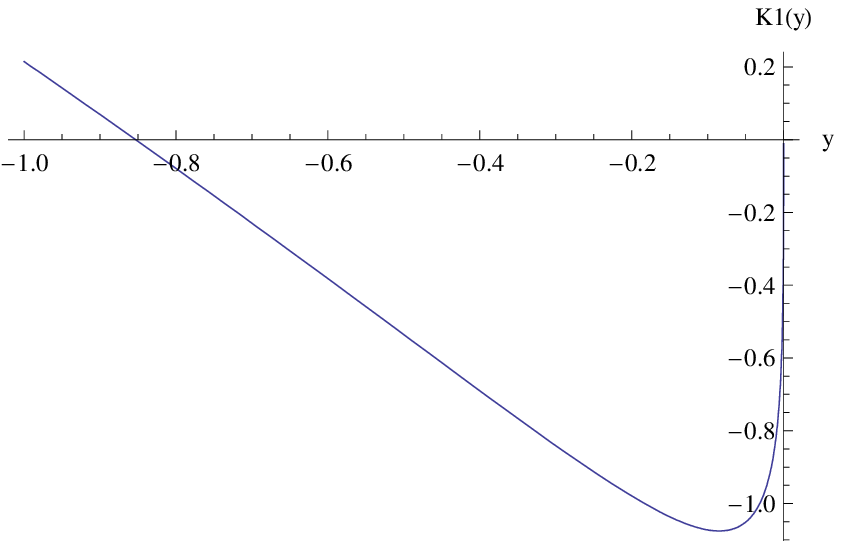}}\\
(a)
\end{minipage}
\begin{minipage}{0.45\textwidth}
\centering{\includegraphics[scale=0.75]{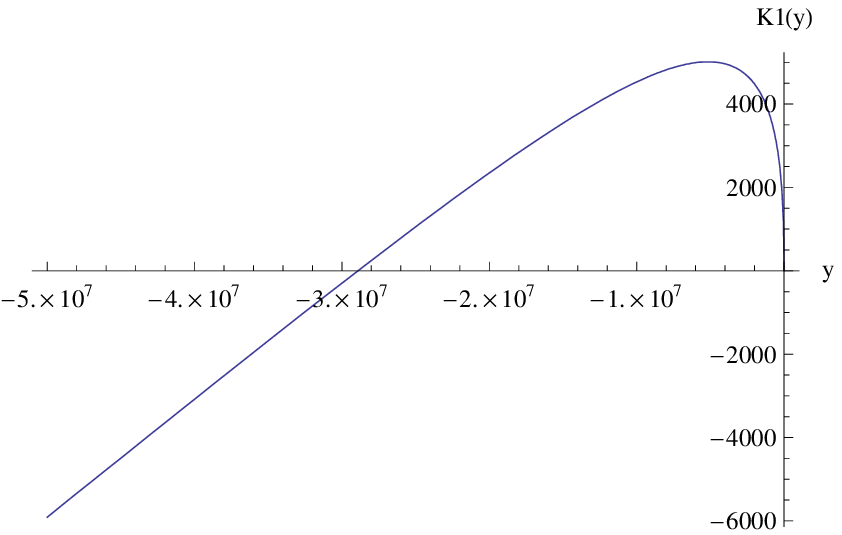}}\\
(b)
\end{minipage}
 \caption{\small A
numerical example illustrating Case 2. Here,
$(m, \beta, \alpha, H(0,1), H(1,0))=(0.01, 0.25, 0.1, 1, 5)$ and $(\gamma_0,
\gamma_1, k_0, k_1)=(0.25, 0.75, 1.8, 1.2)$. (a) $K_1(y)$ function
(a) in the neighborhood of the origin in the transformed space and
(b) in the large negative value.}
\end{center}
\label{fig:shape}
\end{figure}

Thanks to Proposition
\ref{prop:smooth-fit}, the value function $v(x,1)$ is given by \eqref{eq:exp-v1}.
Figure 2.3 displays the solution for a particular set of  parameters.
\begin{figure}[h]
\label{fig:2}
\begin{center}
\begin{minipage}{0.45\textwidth}
\centering{\includegraphics[scale=0.75]{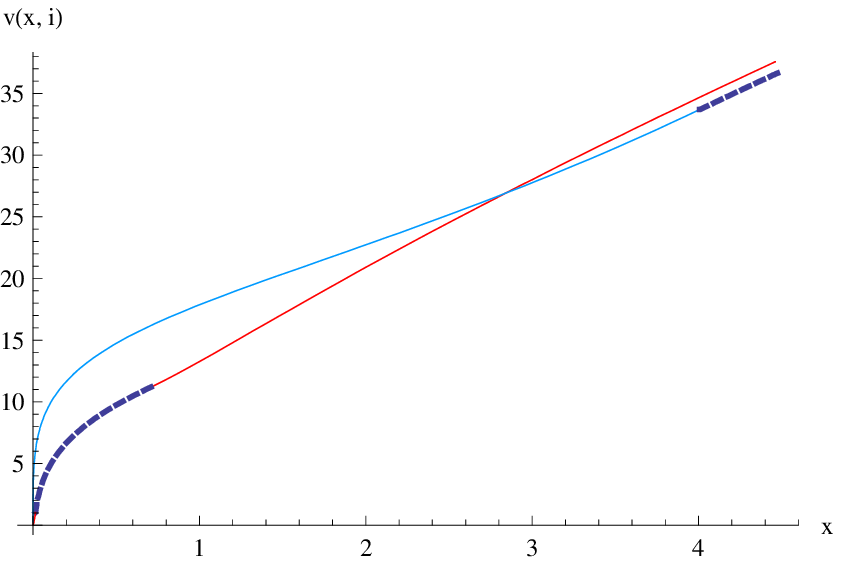}}\\
(a)
\end{minipage}
\begin{minipage}{0.45\textwidth}
\centering{\includegraphics[scale=0.75]{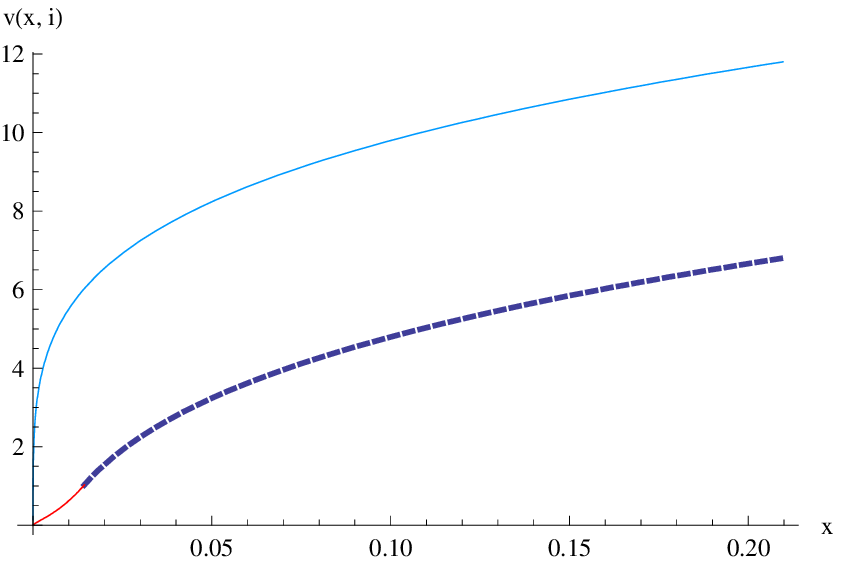}}\\
(b)
\end{minipage}
\caption{\small (a) A numerical example illustrating Case 2. Here,
$(m, \beta, \alpha, H(0,1), H(1,0))=(0.01, 0.25, 0.1, 1, 5)$ and
$(\gamma_0, \gamma_1, k_0, k_1)=(0.25, 0.75, 1.8, 1.2)$. The unknown
variables are determined to  be $(\beta_0, \widetilde{\beta}_1,
\hat{\beta}_1)=(0.450813, 1.06257, 5014.6)$ and $(a, \tilde{b},
c)=(4.00677, 0.0143517, 0.709694)$. $v(x, 0)$ is plotted in a blue
line on $x\in (0, a)$ and in a dashed line on $x\in [a, \infty)$.
$v(x, 1)$ is plotted in a red line in $x\in (0, \tilde{b})$ and $(c,
\infty)$.  It is plotted in a dashed line on $x\in [\tilde{b},c]$.
(b) To show the multiple continuation regions more clearly, we
magnify the left picture (a) near the origin.}
\end{center}
\end{figure}

\begin{example} \normalfont \textbf{Ornstein-Uhlenbeck process.}
The purpose of this exercise is to solve give an example of an optimal switching problem for a mean-reverting process. Let the dynamics of the state variable be given by
\begin{equation*} dX_t=\delta(m-X_t)\diff t+\sigma dW_t.
\end{equation*}
Let the value function be defined by
\begin{equation*}
v(x,i)=\sup_{T \in \S}\ME\left[\int_0^{\tau_0}
e^{-\alpha t}(X_t-K)I_tdt -
\sum_{\tau_i<\tau_0}e^{-\alpha\tau_i}H(X_{\tau_i},I_{i},I_{i+1})\right],
\end{equation*}
in which $\tau_0=\inf\{t>0:X_t=0\}$.
Our assumptions in Section 2.1, (\ref{eq:assum:dyn}), (\ref{eq:ass-H}) and (\ref{eq:ass-f}) are satisfied by our model.
Let us introduce
\begin{equation}\label{eq:t-phi-pis}
\tilde{\psi}(x) \triangleq e^{\delta
x^2/2}\mathcal{D}_{-\alpha/\delta}(-x\sqrt{2\delta})\quad\text{and}\quad
\tilde{\varphi}(x) \triangleq e^{\delta
x^2/2}\mathcal{D}_{-\alpha/\delta}(x\sqrt{2\delta}),
\end{equation}
where
$\mathcal{D}_\nu(\cdot)$ is the parabolic cylinder function; (see
Borodin and Salminen \cite{BS2002}(Appendices 1.24 and 2.9), which is given in terms of the Hermite function as
\begin{equation} \label{eq:Dft}
\mathcal{D}_\nu(z)=2^{-\nu/2}e^{-z^2/4}\mathcal{H}_\nu(z/\sqrt{2}),
\quad z\in\mathbb{R}.
\end{equation}
Recall that Hermite function $\mathcal{H}_\nu$ of degree $\nu$
and its integral representation
\begin{equation}\label{eq:Hermite}
\mathcal{H}_\nu(z)=\frac{1}{\Gamma(-\nu)}\int_0^\infty
e^{-t^2-2tz}t^{-\nu-1}dt, \quad \text{Re}(\nu)<0,
\end{equation}
(see for example, Lebedev \cite{L1972} (pages 284, 290)).
In terms of the functions in (\ref{eq:t-phi-pis}) the fundamental solutions of $(\A_0-\alpha)u=0$ and $(\A_1-\alpha)u=0$ are given by
\[
\begin{split}
\psi(x)&=\tilde{\psi}((x-m)/\sigma), \quad
\varphi(x)=\tilde{\varphi}((x-m)/\sigma).
% ,
%\\\psi_1(x)&=\tilde{\psi}((x-m+\lambda)/\sigma),
%\quad \varphi_1(x)=\tilde{\varphi}((x-m+\lambda)/\sigma).
\end{split}
\]
Observe that $\lim_{x \rightarrow \infty}x/\psi(x)=0$ (the main
assumption of Proposition~\ref{prop:hitting-times}). Since
$\E^x[X^{(0)}_t]=e^{-\delta t}x+m(1-e^{-\delta t})$, we have
\begin{equation}
q^{(0)}(x,0)=0 \quad \text{and} \quad q^{(0)}(x,
1)=\frac{x-m}{\delta+\alpha}+\frac{m-K}{\alpha}.
\end{equation}
Note that the limits of the functions
\[
h_0(x)=
q^{(0)}(x,1)-q^{(0)}(x,0)-H(0,1)=\frac{x-m}{\delta+\alpha}+\frac{m-K}{\alpha}-H(0,1),
\]
and
\[
h_1(x) =-\left(\frac{x-m}{\delta+\alpha}+\frac{m-K}{\alpha}+H(1,0)\right)
\]
are given by
\[
\lim_{x \rightarrow \infty}h_0(x)=\infty, \quad \text{and} \quad
\lim_{x \rightarrow
0}h_1(x)=-\left(\frac{-m}{\delta+\alpha}+\frac{m-K}{\alpha}+H(1,0)\right),
\]
When $\frac{K}{\alpha}-\frac{\delta m}{\alpha(\alpha+\delta)}>H(1,0)$,
then $\lim_{x \rightarrow
0}h_1(x)>0$.

Let us show that $\mathbf{K}_0=(M_0,\infty)$ and $\mathbf{K}_1=(-\infty, -M_1)$ for some $M_0, M_1>0$.
 For this purpose we
will again use Remark~\ref{rem:use-conv}.
\begin{equation}
\left(\A-\alpha\right) h_0(x)=\left(\A-\alpha\right)
\left(\frac{x-m}{\delta+\alpha}+\frac{m-K}{\alpha}-H(0,1)\right)=
-x+\frac{\delta m}{\delta+\alpha}+K- \alpha H(0,1),
\end{equation}
which implies that the function $K_0$ is concave only $(M_0,\infty)$, for some $M_0>0$. On the other hand,
\begin{equation}
\left(\A-\alpha\right) h_1(x)
=-\left(\A-\alpha\right)\left(\frac{x-m}{\delta+\alpha}+\frac{m-K}{\alpha}+H(1,0)\right)=
x-\frac{\delta m}{\delta+\alpha}-K+\alpha H(1,0),
\end{equation}
which implies that $K_1(\cdot)$ is concave only on $(-\infty,-M_1)$ for some $M_1>0$. Now, as a result of Proposition~\ref{prop:smooth-fit}, we have that
\begin{align} \nonumber
\begin{aligned}
   v(x, 0) &= \begin{cases}
                 \hat{v}_0(x), & x\in (0,a), \\
                 \hat{v}_1(x)-L,
                 &x \in [a,\infty),
     \end{cases};
  \hspace{0.2cm}
    v(x, 1) &=  \begin{cases}
                 \hat{v}_0(x)-C, & x\in (0,b], \\
                 \hat{v}_1(x),
                 &x\in(b,\infty),\end{cases}
\end{aligned}
\end{align}
in which
\begin{align*}
\hat{v}_0(x)&=\beta_0 (\psi(x)-F(0)\varphi(x))+q^{(0)}(x,0) \\
&=\beta_0e^{\frac{\delta}{2}\frac{(x-m)^2}{\sigma^2}}\left\{\mathcal{D}_{-\alpha/\delta}\left(-\left(\frac{x-m}{\sigma}\right)\sqrt{2\delta}\right)
-F(0)\mathcal{D}_{-\alpha/\delta}\left(\left(\frac{x-m}{\sigma}\right)\sqrt{2\delta}\right)\right\},
\end{align*}
and
\begin{align*}
\hat{v}_1(x)&=\beta_1\varphi_1(x)+q^{(0)}(x,1) \\
&=\beta_1
e^{\frac{\delta(x-m)^2}{2\sigma^2}}\mathcal{D}_{-\alpha/\delta}\left(\frac{(x-m)\sqrt{2\delta}}{\sigma}\right)
+\frac{x-m}{\delta+\alpha}+\frac{m-K}{\alpha}.
\end{align*}
The parameters, $a$, $b$, $\beta_0$ and $\beta_1$ can now be
obtained from continuous and smooth fit since we know that the value
functions $v(\cdot,i)$, $i \in \{0,1\}$ are continuously
differentiable. See Figure 2.4 for a numerical example.
\begin{figure}[h]\label{fig:3}
\begin{center}
\begin{minipage}{1\textwidth}
\centering {\includegraphics[scale=1]{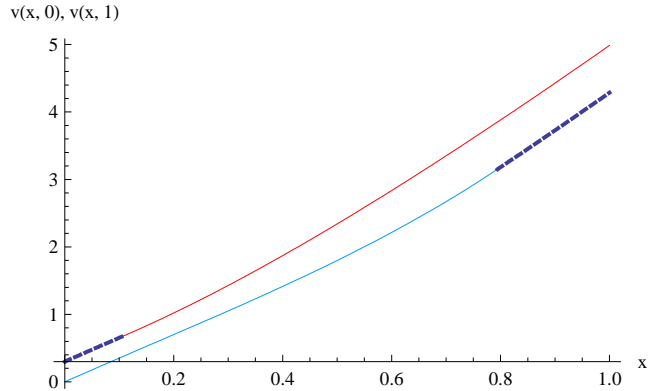}} \\
%(a)
\end{minipage}
%\begin{minipage}{0.45\textwidth}
%\centering \includegraphics[scale=0.75]{OUv1} \\
%(b)
%\end{minipage}
\caption{\small A numerical solution illustrating Example 2.2.
Here, $(m, \alpha, \sigma, \delta, K, H(0,1), H(1,0))=(0.5,
0.105, 0.35, 0.05, 0.4, 0.7, -0.3)$. The switching boundaries are
$a=0.1079$, $b=0.7943$. The other unknowns in (\ref{eq:v-0-m}) and
(\ref{eq:v-1-m}) are determined as $\beta_0=2.7057$ and
$\beta_1=1.4420$. $v(x, 0)$ is plotted in a blue line on $x\in (0,
a]$ and in a dashed black line on $x\in (a, \infty)$.  $v(x, 1)$ is
plotted in a dashed line on $x\in (0, b]$ and in a red line on $x\in
(b, \infty)$.}
\end{center}
\end{figure}
\end{example}
\end{example}

\appendix
\renewcommand{\thesection}{A}
\refstepcounter{section}
\makeatletter
\renewcommand{\theequation}{\thesection.\@arabic\c@equation}
\makeatother

\section{Proof of Lemma~\ref{eq:main-result-1}}

We will approximate the switching problem by iterating optimal stopping problems. This approach is motivated by Davis \cite{MR1283589} (especially the section on impulse control) and \O ksendal and Sulem \cite{oksendal-book-2}.
To establish our goal we will use the properties of the essential supremum (see Karatzas and Shreve \cite{kn:karat2}, Appendix A) and the optimal stopping theory for Markov processes in Fakeev \cite{Fakeev}. A similar proof, in the context of ``multiple optimal stopping problems", is carried out by Carmona and Dayanik \cite{CD2003}.

 For any $\Fb$ stopping time $\sigma$, let us define
 \begin{equation}
 Z^{(n)}_{\sigma}\triangleq \text{ess sup}_{(\tau_1, \cdots, \tau_n) \in \S_{\sigma}^n}\ME\left[\int_{\sigma}^{\tau_{c,d}}e^{-\alpha s}f(X^{(n) }_s,I^{(n)}_s)ds -\sum_{j=1}^n
e^{-\alpha \tau_j}H(X^{(n)}_{\tau_{j}}, I_{j-1},I_j)\bigg | \F_{\sigma}\right] \geq 0,
 \end{equation}
for $n \geq 1$, and
\begin{equation}
Z^{(0)}_{\sigma} \triangleq \ME\left[\int_{\sigma}^{\tau_{c,d}}e^{-\alpha s}f(X^{(0) }_s,I_0)ds\bigg | \F_{\sigma}\right] \geq 0.
\end{equation}
We will perform the proof of the lemma in four steps.

\noindent \textbf{Step 1}. If we can show that the family
\begin{equation}\label{eq:family}
{\mathcal{Z}} \triangleq \left\{\E^{x,i}\left[\int_{\sigma}^{\tau_{c,d}}e^{-\alpha s}f(X^{(n) }_s,I^{(n)}_s)ds -\sum_{j=1}^n
e^{-\alpha \tau_j}H(X^{(n)}_{\tau_{j}}, I_{j-1}, I_j)\bigg | \F_{\sigma}\right]: (\tau_1,\cdots,\tau_n) \in \S_{\sigma}^n\right\},
\end{equation}
 is directed upwards, it follows from the properties of the essential supremum (see Karatzas and Shreve (1998,Appendix A)) that for all $n \in \N$
 \begin{equation}
 Z^{(n)}_{\sigma} =\lim_{k \rightarrow \tau_{c,d}} \uparrow \ME\left[\int_{\sigma}^{\tau_{c,d}}e^{-\alpha s}f(X^{(n),k }_s,I^{(n),k}_s)ds -\sum_{j=1}^n
e^{-\alpha \tau_j}H(X^{(n),k}_{\tau_{j}^k}, I_{j-1}, I_j)\bigg | \F_{\sigma} \right]
 \end{equation}
 for some sequence $\left\{\left(\tau_1^{k}, \cdots \tau_n^{k}\right)\right\}_{k \in \mathbb{N}} \subset \S_{\sigma}^n$. Here, $X^{(n),k}$ is the solution of (\ref{eq:sde}) when we replace $I$ by $I^{(n),k}$ which is defined as
 \begin{equation}
 I^{(n),k}(t) \triangleq I_0 1_{\{t<\tau^k_1\}}+ \cdots+I_{n-1} 1_{\{\tau_{n-1}^k \leq t <\tau_n^k\}}+I_{n} 1_{\{t \geq \tau_n^k\}}.
 \end{equation}

We will now argue that  (\ref{eq:family}) is directed upwards (see Karatzas and Shreve \cite{kn:karat2} Appendix A for the definition of this concept ): For any $(\tau^1_1, \cdots, \tau^1_n),  (\tau^2_1, \cdots, \tau^2_n)\in \S_{\sigma}^n$, let us define the event
\begin{equation}
\begin{split}
A \triangleq &\Bigg\{\ME\bigg[\int_{\sigma}^{\tau_{c,d}}e^{-\alpha s}f(X^{(n),1 }_s,I^{(n),1}_s)ds  -\sum_{j=1}^n
e^{-\alpha \tau^1_j}H(X^{(n),1}_{\tau_{j}^1}, I_{j-1}, I_j)\bigg | \F_{\sigma} \bigg] \\ &\geq\ME\bigg[\int_{\sigma}^{\tau_{c,d}}e^{-\alpha s}f(X^{(n),2 }_s,I^{(n),2}_s)ds -\sum_{j=1}^n
e^{-\alpha \tau_j}H(X^{(n),2}_{\tau_{j}^2}, I_{j-1}, I_j)\bigg | \F_{\sigma} \bigg] \Bigg\},
\end{split}
 \end{equation}
 and the stopping times
 \begin{equation}
\tau^3_i \triangleq \tau_i^1 1_{A}+\tau_i^21_{\Omega-A}, \quad i\in \{1,\cdots,n\}.
 \end{equation}
 Then  $(\tau^3_1,\cdots,\tau^3_n) \in \S_{\sigma}^n$ and
 {\small
 \begin{equation}
 \begin{split}
 &\ME\left[\int_{\sigma}^{\tau_{c,d}}e^{-\alpha s}f(X^{(n),3 }_s,I^{(n),3}_s)ds -\sum_{j=1}^n
e^{-\alpha \tau_j}H(X^{(n),3}_{\tau_{j}^3}, I_{j-1}, I_j)\bigg | \F_{\sigma} \right]=\max  \Bigg\{\ME\bigg[\int_{\sigma}^{\tau_{c,d}}e^{-\alpha s}f(X^{(n),1}_s,I^{(n),1}_s)ds \\ &-\sum_{j=1}^n
e^{-\alpha \tau_j}H(X^{(n),1}_{\tau_{j}^1}, I_{j-1}, I_j)\bigg | \F_{\sigma} \bigg] + \ME\bigg[\int_{\sigma}^{\tau_{c,d}}e^{-\alpha s}f(X^{(n),2 }_s,I^{(n),2}_s)ds -\sum_{j=1}^n
e^{-\alpha \tau_j}H(X^{(n),2}_{\tau_{j}^2}, I_{j-1}, I_j)\bigg | \F_{\sigma} \bigg]\Bigg\},
\end{split}
 \end{equation}}
 and therefore $\mathcal{ Z}$ is directed upwards.

\noindent \textbf{Step 2.}
In this step we will show that
\begin{equation}\label{eq:ess-sup-Z-sigma}
Z_{\sigma}^{(n)}=\text{ess sup}_{\tau \in S_{\sigma}^{1}}\ME\left[\int_{\sigma}^{\tau}e^{-\alpha s}f(X^{(0)}_s,I_0)ds-e^{-\alpha \tau}\left(H(X^{(0)}_{\tau},I_0, I_1)+Z^{(n-1)}_{\tau}\right)\bigg | \F_{\sigma}\right].
\end{equation}
Let us fix $\tau_1 \in \S^1_{\sigma}$.
It follows from Step 1 that there exists a sequence  $\{\left(\tau_2^{k},\cdots,\tau_n^k\right)\}_{k \in \mathbb{N}} \in S_{\tau_1}^{n-1}$ such that
\begin{equation}
Z^{(n-1)}_{\tau_1}=\lim_{k \rightarrow \infty} \uparrow \ME\left[\int_{\tau_1}^{\tau_{c,d}}e^{-\alpha s}f(X^{(n-1),k }_s,I^{(n-1),k}_s)ds -\sum_{j=2}^n
e^{-\alpha \tau_j^k}H(X^{(n-1),k}_{\tau_{j}^k}, I_{j-1}, I_j)\bigg | \F_{\tau_1} \right],
\end{equation}
 Here, $X^{(n-1),k}$ is the solution of (\ref{eq:sde}) when we replace $I$ by $I^{(n-1),k}$ which is defined as
 \begin{equation}
 I^{(n-1),k}(t) \triangleq I_1 1_{\{t<\tau^k_2\}}+ \cdots+I_{n-1} 1_{\{\tau_{n-1}^k \leq t <\tau_{n}^k\}}+I_{n} 1_{\{t \geq \tau_{n}^k\}}.
 \end{equation}
 For every $k \in \N$, we have that $\left(\tau_1,\tau_2^{k},\cdots,\tau_n^{k}\right)\in S_{\sigma}^n$, and that
 \begin{equation}
 Z^{(n)}_{\sigma} \geq \limsup_{k \rightarrow \infty} \ME\left[\int_{\sigma}^{\tau_{c,d}}e^{-\alpha s}f(X^{(n),k }_s,I^{(n),k}_s)ds -\sum_{j=1}^n
e^{-\alpha \tau_j}H(X^{(n),k}_{\tau_{j}^k}, I_{j-1}, I_j)\bigg | \F_{\sigma} \right]
 \end{equation}
in which we take $\tau_1^k=\tau_1$ and $X^{(n),k}$ is the solution of (\ref{eq:sde}) when we replace $I$ by $I^{(n),k}$ which is defined as
 \begin{equation}
 I^{(n),k}(t) \triangleq I_0 1_{\{t<\tau_1\}}+I_1 1_{\{\tau_1 \leq t< \tau_2^k\}}+ \cdots+I_{n-1} 1_{\{\tau_{n-1}^k \leq t <\tau_{n}^k\}}+I_{n} 1_{\{t \geq \tau_{n}^k\}}.
 \end{equation}
We can then write
\begin{equation}\label{eq:ess-sup-half}
\begin{split}
 &Z^{(n)}_{\sigma} \geq \limsup_{k \rightarrow \infty} \Bigg\{\ME\left[\int_{\sigma}^{\tau_1}e^{-\alpha s}f(X^{(n),k }_s,I^{(n),k}_s)ds-e^{-\alpha \tau_1}H(X^{(n)}_{\tau_1},I_0, I_1)\bigg | \F_{\sigma}\right]
 \\ &\qquad \qquad +\ME\Bigg[\int_{\tau_1}^{\tau_{c,d}}e^{-\alpha s}f(X^{(n),k }_s,I^{(n),k}_s)ds-\sum_{j=2}^n
e^{-\alpha \tau^k_j}H(X^{(n),k}_{\tau_{j}^k}, I_{j-1}, I_j)\bigg | \F_{\sigma}\Bigg]\Bigg\}
\\ &= \ME\left[\int_{\sigma}^{\tau_1}e^{-\alpha s}f(X^{(0)}_s,I_0)ds-e^{-\alpha \tau_1}H(X^{(0)}_{\tau_1}, I_0, I_1)\bigg | \F_{\sigma}\right]
\\&+\ME\Bigg[\lim_{k \rightarrow \infty}\E^{X^{(0)}_{\tau_1},I_1}\bigg[\int_{\tau_1}^{\tau_{c,d}}e^{-\alpha s}f(X^{(n-1),k }_s,I^{(n-1),k}_s)ds-\sum_{j=2}^n
e^{-\alpha \tau^k_j}H(X^{(n-1),k}_{\tau_{j}^k}, I_{j-1}, I_j)\bigg]\Bigg | \F_{\sigma}\Bigg]
\\&=\ME\left[\int_{\sigma}^{\tau_1}e^{-\alpha s}f(X^{(0)}_s,I_0)ds-e^{-\alpha \tau_1}\left(H(X^{(0)}_{\tau_1},I_0, I_1)+Z_{\tau_1}^{(n-1)}\right)\bigg | \F_{\sigma}\right].
 \end{split}
 \end{equation}
Here, the first equality follows from the Monotone Convergence Theorem (here we used the boundedness assumption on $H$, see (\ref{eq:ass-H})). Since $\tau_1$ is arbitrary this implies that the left-hand-side of (\ref{eq:ess-sup-half}) is greater than the right-hand-side of (\ref{eq:ess-sup-Z-sigma}). Let us now try to show the reverse inequality.
Let for any $(\tau_1, \cdots, \tau_n) \in \S_{\sigma}^{n}$ let $I^{(n)}$ be given by (\ref{eq:I}) and let $X^{(n)}$ be the solution of (\ref{eq:sde}) when $I$ is replaced by $I^{(n)}$. And let us define $I^{(n-1)}$ by
 \begin{equation}
 I^{(n-1)}(t) \triangleq I_1 1_{\{t<\tau_2\}}+ \cdots+I_{n-1} 1_{\{\tau_{n-1} \leq t <\tau_{n}\}}+I_{n} 1_{\{t \geq \tau_{n}\}},
 \end{equation}
and let $X^{(n-1)}$ be the solution of (\ref{eq:sde}) when $I$ is replaced by $I^{(n-1)}$. Then
\begin{equation}
    \begin{split}
 \ME& \left[\int_{\sigma}^{\tau_1}e^{-\alpha s}f(X^{(n) }_s,I^{(n)}_s)ds-e^{-\alpha \tau_1}H(X^{(n)}_{\tau_1},I_0, I_1)\bigg | \F_{\sigma}\right]
 \\ &\qquad \qquad +\ME\left[\int_{\tau_1}^{\tau_{c,d}}e^{-\alpha s}f(X^{(n)}_s,I^{(n)}_s)ds-\sum_{j=2}^n
e^{-\alpha \tau_j}H(X^{(n)}_{\tau_{j}}, I_{j-1}, I_j)\bigg | \F_{\sigma}\right]
\\ &= \ME\left[\int_{\sigma}^{\tau_1}e^{-\alpha s}f(X^{(0)}_s,I_0)ds-e^{-\alpha \tau_1}H(X^{(0)}_{\tau_1},I_0, I_1)\bigg | \F_{\sigma}\right]
\\&+\ME\Bigg[\ME\bigg[\int_{\tau_1}^{\tau_{c,d}}e^{-\alpha s}f(X^{(n-1)}_s,I^{(n-1)}_s)ds-\sum_{j=2}^n
e^{-\alpha \tau_j}H(X^{(n-1)}_{\tau_{j}}, I_{j-1}, I_j)\bigg|\F_{\tau_1}\bigg]\Bigg | \F_{\sigma}\Bigg]
\\& \leq \ME\left[\int_{\sigma}^{\tau_1}e^{-\alpha s}f(X^{(0)}_s,I_0)ds-e^{-\alpha \tau_1}\left(H(X^{(0)}_{\tau_1}, I_0, I_1)+Z_{\tau_1}^{(n-1)}\right)\bigg | \F_{\sigma}\right],
 \end{split}
\end{equation}
now taking the essential supremum on the right-hand-side over all the sequences in $\S_{\sigma}^n$ we establish the desired inequality. Our proof in this step can be contrasted with the approach of Hamad\'{e}ne and Jeanblanc \cite{HJ2004} which uses the recently developed theory of Reflected Backward Stochastic Differential Equations to establish a similar result. The proof method we use above is more direct. On the other hand, as pointed out on page 14 of Carmona and Ludkovski \cite{CL2005}, it may be difficult to generalize the method of Hamad\'{e}ne and Jeanblanc \cite{HJ2004} to the cases when there are more than two regimes.

\noindent \textbf{Step 3.}

In this step we will argue that
\begin{equation}\label{eq:markv-cs}
Z^{(n)}_t = e^{-\alpha t} q^{(n)}(X^{(n)}_t,I^{(n)}_t), \quad t \geq 0,
\end{equation}
in which $I_t^{(0)}=I_0$, $t \geq 0$ and that $q^{(n)}$ is continuous in the $x$-variable.
We will carry out the proof using induction. First, let us write $q^{(1)}$ as
\begin{equation}\label{eq:q-1-x-i}
\begin{split}
q^{(1)}(x,i)&=\sup_{\tau \in \S_{0}^{1}} \ME\left[\int_0^{\tau_{c,d}}e^{-\alpha s}f(X_{s}^{(1)},I^{(1)}_s)-e^{-\alpha \tau}H(X_{\tau}^{(0)},I_0,I_1)\right]
\\ &=\sup_{\tau \in \S_{0}^{1}}\ME \left[\int_0^{\tau}e^{-\alpha s}f(X_{s}^{(0)},I_0)ds+\int_{\tau}^{\tau_{c,d}}e^{-\alpha s}f(X_{s}^{(1)},I_1)ds
-e^{-\alpha \tau}H(X_{\tau}^{(1)},I_0,I_1)\right]
\\&=\sup_{\tau \in \S_{0}^{1}}\ME \left[\int_0^{\tau}e^{-\alpha s}f(X_{s}^{(0)},I_0)ds+
\E^{X^{(0)}_{\tau},I_1}\left[\int_{\tau}^{\tau_{c,d}}e^{-\alpha s}f(X_{s}^{(0)},I_1)ds
-e^{-\alpha \tau}H(X_{\tau}^{(0)},I_0,I_1)\right]\right]
\\&=q^{(0)}(x,i)+ \sup_{\tau \in \S_{0}^{1}}\ME \left[e^{-\alpha \tau}\left(-q^{(0)}(X^{(0)}_{\tau},I_0)+q^{(0)}(X^{(0)}_{\tau},I_1)-H(X_{\tau}^{(0)},I_0,I_1)\right)\right].
\end{split}
\end{equation}
Let $\theta$ be the shift operator .
The third inequality in (\ref{eq:q-1-x-i}) follows from the strong Markov property of $(X^{(0)}_s)_{s \geq 0}$ and $(X^{(1)}_s,I^{(1)}_s)_{s \geq 0}$ and the fact that
\begin{equation}
\tau_{c,d}=\tau+\tau_{c,d} \circ \theta_{\tau},
\end{equation}
for any $\tau \in \S_{0}^1$, using which we can write
\begin{equation}
\ME \left[\int_0^{\tau}e^{-\alpha s}f(X_s^{(0)},I_0)ds\right]=q^{(0)}(x,i)-\ME \left[e^{-\alpha \tau}q^{(0)}(X^{(0)}_{\tau},I_0)\right],
\end{equation}
and
\begin{equation}\label{eq:imp-smp}
\begin{split}
&\ME\left[\int_{\tau}^{\tau_{c,d}}e^{-\alpha s}f(X_{s}^{(1)},I_1)ds\right]
=\ME \left[e^{-\alpha \tau} \ME\left[\int_{0}^{\tau_{c,d}}e^{-\alpha s}f(X_{s}^{(0)},I_1)ds\bigg| \F_{\tau} \right]\right]
\\&=\ME \left[e^{-\alpha \tau} \E^{X_{\tau}^{(0)},I_1}\left[\int_{0}^{\tau_{c,d}}e^{-\alpha s}f(X_{s}^{(0)},I_1)ds\right]\right]
=\ME\left[e^{-\alpha \tau}q^{(0)}(X^{(0)}_{\tau},I_1)\right]
\end{split}
\end{equation}

It is well known in the optimal stopping theory that (\ref{eq:markv-cs}) holds for $n=1$, if
\begin{equation}\label{eq:cond-fak}
A \triangleq \ME \left[\sup_{t \geq 0} e^{-\alpha t}\left(-q^{(0)}(X^{(0)}_{t},I_0)+q^{(0)}(X^{(0)}_{t},I_1)-H(X_{t}^{(0)},I_0,I_1)\right)^{-}\right]<\infty,
\end{equation}
and
\begin{equation}
x \rightarrow -q^{(0)}(x,I_0)+q^{(0)}(x,I_1)-H(x,I_0,I_1), \; x \in (c,d) \quad \text{is continuous},
\end{equation}
see Theorem 1 of Fakeev \cite{Fakeev}. (Fakeev requires $C_0$ continuity of
$-q^{(0)}(X^{(0)}_{t},I_0)+q^{(0)}(X^{(0)}_{t},I_1)-H(X_{t}^{(0)},I_0,I_1)$.
But this requirement is readily satisfied in our case since $X^{(0)}$ is continuous and since $-q^{(0)}(X^{(0)}_{t},I_0)+q^{(0)}(X^{(0)}_{t},I_1)-H(X_{t}^{(0)},I_0,I_1)$ is continuous, by the continuity assumption of $H$ and \eqref{eq:R-lr}.

But the growth conditions (\ref{eq:ass-H}), (\ref{eq:ass-f}) guarantee that (\ref{eq:cond-fak}) holds (using (\ref{eq:f-condition})).

 Now let us assume that (\ref{eq:markv-cs}) when $n$  is replaced by $n-1$ and that $q^{(n-1)}$ is continuous in the $x$-variable and show that (\ref{eq:markv-cs}) holds and $q^{(n)}$ is continuous in the $x$-variable.
>From Step 2 and the induction hypothesis we can write $q^{(n)}$ as
\begin{equation}
\begin{split}
q^{(n)}(x,i)&=\text{sup}_{\tau \in S_{0}^{1}}\ME\left[\int_{0}^{\tau}e^{-\alpha s}f(X^{(0)}_s,I_0)ds-e^{-\alpha \tau}H(X^{(0)}_{\tau},I_0, I_1)+Z^{(n-1)}_{\tau}\right]
\\&=\text{sup}_{\tau \in S_{0}^{1}}\ME\left[\int_{0}^{\tau}e^{-\alpha s}f(X^{(0)}_s,I_0)ds-e^{-\alpha \tau}\left(H(X^{(0)}_{\tau},I_0, I_1)+q^{(n-1)}(X^{(n-1)}_{\tau},I^{(n-1)}_{\tau})\right)\right]
\\&=\text{sup}_{\tau \in S_{0}^{1}}\ME\left[\int_{0}^{\tau}e^{-\alpha s}f(X^{(0)}_s,I_0)ds-e^{-\alpha \tau}\left(H(X^{(0)}_{\tau},I_0, I_1)+q^{(n-1)}(X^{(0)}_{\tau},I^{(0)}_{\tau})\right)\right],
\\&=q^{(0)}(x,i)+ \sup_{\tau \in \S_{0}^{1}}\ME \left[e^{-\alpha \tau}\left(-q^{(0)}(X^{(0)}_{\tau},I_0)+q^{(n-1)}(X^{(0)}_{\tau},I_1)-H(X_{\tau}^{(0)},I_0,I_1)\right)\right]
\end{split}
\end{equation}
where the third equality follows since $X^{(n-1)}_t=X^{(0)}_t$ for $t \leq \tau$, and the last equality can be derived using the strong Markov property of $(X^{(0)})_{t \geq 0}$ and $(X^{(n-1)}_t,I^{(n-1)}_t)_{t \geq 0}$.
The functions $H$ and $q^{(0)}$ are continuous in the $x$-variable and $q^{(n-1)}$ is assumed to satisfy the same property. On the other hand, we have that
\begin{equation}\label{eq:cond-fak-2}
 B\triangleq \ME \left[\sup_{t \geq 0} e^{-\alpha t}\left(-q^{(0)}(X^{(0)}_{t},I_0)+q^{(n-1)}(X^{(0)}_{t},I_1)-H(X_{t}^{(0)},I_0,I_1)\right)^{-}\right]<\infty,
\end{equation}
satisfies $B \leq A<\infty$, in which $A$ is defined in (\ref{eq:cond-fak}), since $(q^{(n)})_{n \in \N}$ is an increasing sequence of functions. Therefore, Theorem 1 of Fakeev \cite{Fakeev} implies that (\ref{eq:markv-cs}) holds. On the other hand, Lemma 4.2, Proposition 5.6 and Proposition 5.13 of \cite{DK2003} guarantee that $q^{(n)}$ is continuous. This concludes our induction argument and hence Step 3.

\noindent \textbf{Step 4.}
In this step we will show that the statement of the lemma holds using the results proved in the previous steps.

By definition we already have that
\begin{equation}
q^{(0)}(x,i)=w^{(0)}(x,i).
\end{equation}

Let us assume that the statement holds for $n$ replaced by $n-1$. From the previous step and the induction hypothesis we have that
\begin{equation}
\begin{split}
q^{(n)}(x,i)&=q^{(0)}(x,i)+ \sup_{\tau \in \S_{0}^{1}}\ME \left[e^{-\alpha \tau}\left(-q^{(0)}(X^{(0)}_{\tau},I_0)+q^{(n-1)}(X^{(0)}_{\tau},I_1)-H(X_{\tau}^{(0)},I_0,I_1)\right)\right]
\\&=q^{(0)}(x,i)+ \sup_{\tau \in \S_{0}^{1}}\ME \left[e^{-\alpha \tau}\left(-q^{(0)}(X^{(0)}_{\tau},I_0)+w^{(n-1)}(X^{(0)}_{\tau},I_1)-H(X_{\tau}^{(0)},I_0,I_1)\right)\right]
\\&= \sup_{\tau \in \S_{0}^{1}}\ME\left[\int_0^\tau
e^{-\alpha s}f(X^{(0)}_s,i)ds + e^{-\alpha\tau}
\left(w^{(n-1)}(X^{(0)}_{\tau},1-i)-H(X^{(0)}_{\tau}, i, 1-i)\right)\right]=w^{(n)}(x,i),
\end{split}
\end{equation}
where the last equality follows from (\ref{eq:imp-smp}). This completes the proof.

\bibliographystyle{dcu}
\bibliography{references}

\end{document}